\begin{document}

\newcommand{\mmbox}[1]{\mbox{${#1}$}}
\newcommand{\proj}[1]{\mmbox{{\mathbb P}^{#1}}}
\newcommand{\affine}[1]{\mmbox{{\mathbb A}^{#1}}}
\newcommand{\Ann}[1]{\mmbox{{\rm Ann}({#1})}}
\newcommand{\caps}[3]{\mmbox{{#1}_{#2} \cap \ldots \cap {#1}_{#3}}}
\newcommand{\N}{{\mathbb N}}
\newcommand{\Z}{{\mathbb Z}}
\newcommand{\Q}{{\mathbb Q}}
\newcommand{\R}{{\mathbb R}}
\newcommand{\K}{{\mathbb K}}
\newcommand{\p}{{\mathbb P}}
\newcommand{\A}{{\mathcal A}}

\newcommand{\CC}{{\mathcal C}}
\newcommand{\C}{{\mathbb C}}
\newcommand{\CR}{C^r(\hat P)}
\newcommand{\Co}{C^0(\hat P)}
\newcommand{\vs}{\vspace{5pt}}
\newcommand{\An}{\mathop{\rm A_n}\nolimits}
\newcommand{\Aroot}{\mathop{\rm A}\nolimits}
\newcommand{\Bn}{\mathop{\rm B}\nolimits}
\newcommand{\Dn}{\mathop{\rm D}\nolimits}
\newcommand{\Tor}{\mathop{\rm Tor}\nolimits}
\newcommand{\ann}{\mathop{\rm ann}\nolimits}
\newcommand{\Ext}{\mathop{\rm Ext}\nolimits}
\newcommand{\depth}{\mathop{\rm depth}\nolimits}
\newcommand{\E}{\mathop{\rm E}\nolimits}
\newcommand{\Hom}{\mathop{\rm Hom}\nolimits}
\newcommand{\im}{\mathop{\rm im}\nolimits}
\newcommand{\codim}{\mathop{\rm codim}\nolimits}
\newcommand{\pdim}{\mathop{\rm pdim}\nolimits}
\newcommand{\rank}{\mathop{\rm rank}\nolimits}
\newcommand{\conv}{\mathop{\rm conv}\nolimits}
\newcommand{\cone}{\mathop{\rm cone}\nolimits}
\newcommand{\st}{\mathop{\rm star}\nolimits}
\newcommand{\supp}{\mathop{\rm supp}\nolimits}
\newcommand{\arrow}[1]{\stackrel{#1}{\longrightarrow}}
\newcommand{\CB}{Cayley-Bacharach}
\newcommand{\coker}{\mathop{\rm coker}\nolimits}
\sloppy
\newtheorem{defn0}{Definition}[section]
\newtheorem{prop0}[defn0]{Proposition}
\newtheorem{conj0}[defn0]{Conjecture}
\newtheorem{que0}[defn0]{Question}
\newtheorem{thm0}[defn0]{Theorem}
\newtheorem{lem0}[defn0]{Lemma}
\newtheorem{corollary0}[defn0]{Corollary}
\newtheorem{example0}[defn0]{Example}
\newtheorem{rmk0}[defn0]{Remark}

\newenvironment{defn}{\begin{defn0}}{\end{defn0}}
\newenvironment{prop}{\begin{prop0}}{\end{prop0}}
\newenvironment{que}{\begin{que0}}{\end{que0}}
\newenvironment{conj}{\begin{conj0}}{\end{conj0}}
\newenvironment{thm}{\begin{thm0}}{\end{thm0}}
\newenvironment{lem}{\begin{lem0}}{\end{lem0}}
\newenvironment{cor}{\begin{corollary0}}{\end{corollary0}}
\newenvironment{exm}{\begin{example0}\rm}{\end{example0}}
\newenvironment{rmk}{\begin{rmk0}\rm}{\end{rmk0}}

\newcommand{\defref}[1]{Definition~\ref{#1}}
\newcommand{\propref}[1]{Proposition~\ref{#1}}
\newcommand{\queref}[1]{Question~\ref{#1}}
\newcommand{\thmref}[1]{Theorem~\ref{#1}}
\newcommand{\lemref}[1]{Lemma~\ref{#1}}
\newcommand{\corref}[1]{Corollary~\ref{#1}}
\newcommand{\exref}[1]{Example~\ref{#1}}
\newcommand{\secref}[1]{Section~\ref{#1}}
\newcommand{\poina}{\pi({\mathcal A}, t)}
\newcommand{\poinc}{\pi({\mathcal C}, t)}
\newcommand{\std}{Gr\"{o}bner}
\newcommand{\jq}{J_{Q}}

\title[Equivariant Chow cohomology of nonsimplicial toric varieties]%
 {Equivariant Chow cohomology \\of nonsimplicial toric varieties}

\author{Hal Schenck}
\thanks{Schenck supported by  NSF 0707667}\address{Schenck: Mathematics Department \\ University of Illinois Urbana-Champaign\\
  Urbana \\ IL 61801\\ USA}
\email{schenck@math.uiuc.edu}

\subjclass[2010]{Primary 14M25, Secondary 14F23, 13D40, 52C99} \keywords{Chow ring, toric variety, piecewise polynomial function.}

\begin{abstract}
\noindent For a toric variety $X_\Sigma$ determined by a polyhedral fan $\Sigma \subseteq N$, 
Payne shows that the equivariant Chow cohomology is the $Sym(N)$--algebra $C^0(\Sigma)$ of
integral piecewise polynomial functions on $\Sigma$. We use the Cartan-Eilenberg spectral
sequence to analyze the associated reflexive sheaf $\mathcal{C}^0(\Sigma)$ on $\p_{\Q}(N)$,
showing that the Chern classes depend on subtle geometry of $\Sigma$ and giving 
criteria for the splitting of $\mathcal{C}^0(\Sigma)$ as a sum of line bundles. 
For certain fans associated to the reflection arrangement $\An$, we describe 
a connection between $C^0(\Sigma)$ and logarithmic vector fields tangent to $\An$. 
\end{abstract}
\maketitle
\vskip -1in
\section{Introduction}\label{sec:intro}
In \cite{bdp}, Bifet-De-Concini-Procesi show that the
integral equivariant cohomology ring $H_T^*(X_\Sigma)$ of a smooth toric 
variety $X_\Sigma$ is isomorphic to the integral Stanley-Reisner 
ring $A_\Sigma$ of the unimodular fan $\Sigma$, and in 
\cite{b2}, Brion shows that for $\Sigma$ simplicial, 
the rational equivariant Chow ring $A_T^*(X_\Sigma)_{\Q}$ is isomorphic 
to the ring of rational piecewise polynomial functions 
$C^0(\Sigma)_{\Q}$. A result of Billera \cite{b} shows that 
for a simplicial fan, $C^0(\Sigma)_{\Q}$ is isomorphic to the
rational Stanley-Reisner ring of the fan, so Brion's 
result is similar in spirit to \cite{bdp}. Brion and Vergne completed 
the picture for the simplicial case by showing in \cite{bv} that 
\[
A_T^*(X_\Sigma)_{\Q} \simeq H_T^*(X_\Sigma)_{\Q}.
\]
Integral cohomology is more delicate, in \cite{p},  Payne exhibits a 
complete toric surface with 2-torsion in $H_T^3$.
For a nonsimplicial fan $\Sigma$ there is no Stanley-Reisner ring,
but the results of Billera and Brion suggest that $C^0(\Sigma)$ 
could serve as a  possible substitute. In \cite{p} Payne proves that 
the integral equivariant Chow ring $A^*_T(X_\Sigma)$ does indeed satisfy
\[
A^*_T(X_\Sigma) \simeq  C^0(\Sigma).
\]
We analyze the $S=Sym_{\Q}(N)$-module structure of $C^0(\Sigma)_{\Q}$ and the
associated reflexive sheaf $\mathcal{C}^0(\Sigma)$ on $\p_{\Q}(N)$. 
In contrast to the simplicial case, where $C^0(\Sigma)_{\Q}$ 
is completely determined by the combinatorics of $\Sigma$, in the nonsimplicial
case there are surprising and subtle contributions from the geometry of $\Sigma$.
We begin by defining a cellular chain complex $\mathcal{C}$ whose 
top homology module is $C^0(\Sigma)$, then study the lower homology modules of the complex.
We give a complete description of the loci of the top dimensional support of the modules
$H_i(\mathcal{C})$. Using a Cartan-Eilenberg spectral sequence, we also obtain 
sufficient criteria for the splitting of $\mathcal{C}^0(\Sigma)$ as 
a sum of line bundles. \pagebreak

\subsection{Main results} \mbox{ }
\vskip .04in
\noindent{\bf Theorem 1} For all $i \ge 1$, $H_{d-i}(\mathcal{C})$ is supported in codimension at least $i+1$, and all associated primes of codimension $i+1$ are linear. 
\vskip .04in
\noindent In Theorem~\ref{minassPrimes} we give a combinatorial description of the 
codimension $i+1$ associated primes of $H_{d-i}(\mathcal{C})$ in terms of 
$\Sigma_{d-i}$ and $\Sigma_{d-i+1}$. 
\vskip .04in
\noindent{\bf Theorem 2} If $H_i(\mathcal{C})$ is either 
Cohen-Macaulay of codimension $d-i+1$ or zero for all $i<d$, 
then $C^0(\Sigma)$ is free.
\vskip .04in
\noindent In particular, $C^0(\Sigma)$ can be free even when some 
lower homology modules are nonzero, whereas
in the simplicial case, $C^0(\Sigma)$ is free 
iff $H_{d-i}(\mathcal{C})=0$ for all $i\ge 1$.
\begin{exm}\label{exm:first}\cite{ms}
Let $\Sigma$ be the polyhedral fan obtained by coning over the complex below:
\begin{figure}[ht]
\begin{center}
\epsfig{file=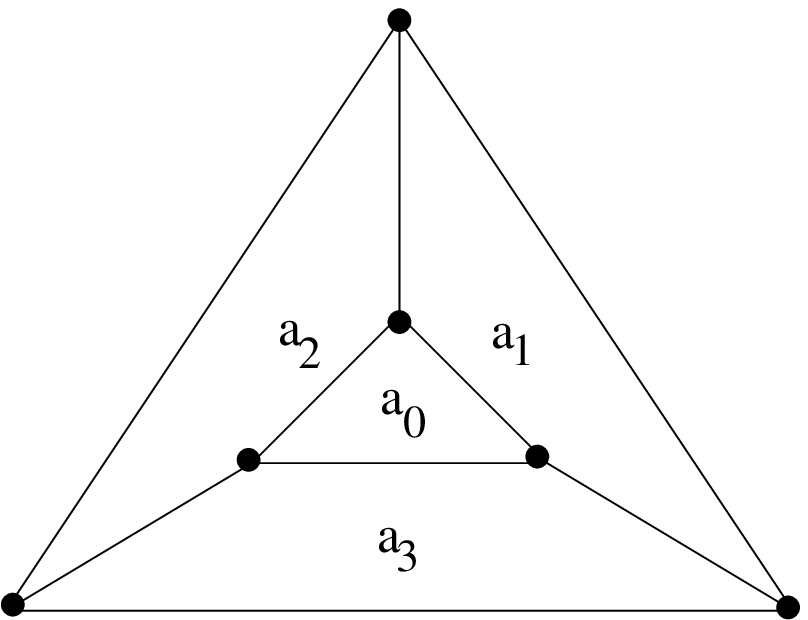,height=1.3in,width=1.5in}
\end{center}
\caption{\textsf{A two dimensional section of $\Sigma$}}
\label{fig:braid}
\end{figure}
The fan $\Sigma$ defines a three dimensional toric variety, having 4 maximal
cones, and a computation shows that the Hilbert polynomial of $C^0(\Sigma)$ is:
\[
HP(C^0(\Sigma),k) = 2k^2+2.
\]
The fan $\Sigma$ is non-generic; in the figure the three lines connecting
boundary vertices to interior vertices meet at a point. Perturbing one vertex
in the figure so the symmetry is broken yields a combinatorially equivalent fan $\Sigma'$,
with
\[
HP(C^0(\Sigma'),k) = 2k^2+1.
\]
Both of these calculations follow from Corollary~\ref{3d}.
The six interior facets of $\Sigma$ correspond to six lines in $\p^2$ defining the 
braid arrangement A$_3$. Theorem~\ref{Aroot} provides a second interpretation of the Hilbert polynomial.
\end{exm}
In Example~2.11, we use Theorem~1 to analyze the four dimensional analog 
of Example~\ref{exm:first}, showing that the module $C^0(\Sigma)$ is free with
generators in degrees $\{0,1,2,3,4\}$. Motivated by this, we define 
two families of fans associated to the reflection arrangement $\An$. One family 
corresponds to projective space, so the fans are all smooth and complete. 
The second family of fans generalizes the example above, and the fans are neither 
complete nor simplicial. We show:
\vskip .04in
\noindent{\bf Theorem 3} For both families of fans described above,
$C^0(\Sigma)$ is isomorphic to the module of logarithmic vector fields 
on the reflection arrangement A$_n$.

\pagebreak

\section{Cellular homology}\label{sec:two}
Let $\Sigma$ be a $d$-dimensional polyhedral fan embedded in $\mathbb{Q}^d$.
We make the simplifying assumption that $\Sigma$ is {\em hereditary}; this 
means for every nonempty face $\sigma$ of $\Sigma$, the dual graph of the 
star of $\sigma$ is connected. Henceforth we use {\em fan} in place 
of {\em hereditary rational polyhedral fan embedded in $\mathbb{Q}^d$}.
A $C^0$--spline on $\Sigma$ is a piecewise 
polynomial function such that two polynomials supported on $d$-faces 
which share a common $(d-1)$--face $\tau$ meet continuously along that face. 
Let $\Sigma^0_i$ denote the set of interior $i$ faces of $\Sigma$ $(\Sigma^0_d = \Sigma_d)$, and let
$f^0_i= |\Sigma^0_i| $; for a face $\tau$ write $\overline{\tau}$ for the linear span. 
The set of all $C^0$--splines on $\Sigma$ is a graded module over both $\Z[x_1,\ldots,x_d]$
and $\Q[x_1,\ldots,x_d]$. In \cite{br}, Billera-Rose observe that 
the module of splines of smoothness $r$ arises as the kernel of a 
matrix; for $r=0$ their result shows that there is a graded exact sequence:
\begin{equation}\label{BR}
0\longrightarrow C^0(\Sigma) \longrightarrow S^{f_d}\oplus
S^{f_{d-1}^0}(-1) \stackrel{\phi}{\longrightarrow}
S^{f_{d-1}^0}
\longrightarrow N \longrightarrow
0
\end{equation}
\[
 \hbox{where  } \phi = \;\;{\small \left[ \partial_d \Biggm| \begin{array}{*{3}c}
l_{\tau_1} & \  & \  \\
\ & \ddots & \  \\
\ & \ & l_{\tau_m}
\end{array} \right]}
\]
Write $[\partial_d \mid D]$ for $\phi$. To describe $\partial_d$, note 
that the rows of $\partial_d$ are indexed by $\tau \in \Sigma_{d-1}^0$. 
If $a_1, a_2$ denote the $d-$faces adjacent to $\tau$, 
then in the row corresponding to
$\tau$ the continuity condition means that the only nonzero entries
occur in the columns corresponding to $a_1, a_2$, and are
$\pm(+1,-1)$. When $\Sigma$ is simplicial, 
$\partial_d$ is the top boundary map in the (relative) chain complex. 
By Equation~\ref{BR}, $C^0(\Sigma)$ is a second syzygy, so the associated
sheaf is a reflexive $\mathcal{O}_{\p^{d-1}}$--module,
and by Hirzebruch-Riemann-Roch the coefficients of $HP(C^0(\Sigma),k)$ yield the 
Chern classes of $C^0(\Sigma)$. In \cite{ms}, the associated primes of 
$N$ are analyzed using the graph appearing in Definition~\ref{defn:dualG2},
in the special case $i=1$. Since 
\[
N \simeq \Big(\oplus_{\tau \in \Sigma_{d-1}^0}S/I_{\tau} \Big) / \partial_{d},
\]
this suggests constructing a chain complex. For each $k$-dimensional face 
$\xi \prec \Sigma$, let $I_{\xi}$ be the ideal of 
$\overline{\xi}$. In \cite{s}, a chain complex $\mathcal{C}(\Delta)$ is defined 
for a simplicial fan $\Delta$, such that the top homology module of 
$\mathcal{C}(\Delta)$ computes splines of a fixed order of smoothness on $\Delta$.
Using cellular homology, we next carry out a similar construction for polyhedral $\Sigma$.
\begin{defn}\label{compC} For a polyhedral fan $\Sigma$, let $\mathcal{C}$ be the 
complex of $S=\Q[x_1,\ldots,x_d]$ modules, with cellular differential $\partial_i$.
\[
0 \longrightarrow \bigoplus\limits_{\sigma \in \Sigma_d} S 
\stackrel{\partial_d}{\longrightarrow} \bigoplus\limits_{\tau \in \Sigma_{d-1}^0} S/I_{\tau}  
\stackrel{\partial_{d-1}}{\longrightarrow} \bigoplus\limits_{\psi \in \Sigma_{d-2}^0} S/I_{\psi}  
\stackrel{\partial_{d-2}}{\longrightarrow} \ldots
\stackrel{\partial_{2}}{\longrightarrow} \bigoplus\limits_{v \in \Sigma_{1}^0} S/I_{v}\longrightarrow 0.
\]
\end{defn}
By construction, $H_d(\mathcal{C})=C^0(\Sigma)$, and taking Euler characteristics yields
\begin{prop}\label{euler}
\[
HP(C^0(\Sigma),k) = \sum_{i=1}^d (-1)^{d-i} f_{i}^0 \cdot {k+i-1 \choose i-1}
                   +\sum_{i=1}^{d-1}(-1)^{d-1-i}   HP(H_i(\mathcal{C}),k).
\]
\end{prop}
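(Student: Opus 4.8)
The plan is to take Euler characteristics in the sense of Hilbert functions. Recall that the Hilbert function is additive on short exact sequences of finitely generated graded $S$-modules, so the alternating sum of the Hilbert functions of the terms of a bounded complex of such modules equals the alternating sum of the Hilbert functions of its homology. Writing $C_i = \bigoplus_{\xi \in \Sigma_i^0} S/I_\xi$ for the term of $\mathcal{C}$ in homological degree $i$ (with $\Sigma_d^0 = \Sigma_d$ and $I_\sigma = 0$ for $\sigma \in \Sigma_d$), I would split $\mathcal{C}$ into the short exact sequences $0 \to Z_i \to C_i \to B_{i-1} \to 0$ and $0 \to B_i \to Z_i \to H_i(\mathcal{C}) \to 0$; the homology modules are subquotients of the finitely generated modules $C_i$ over the Noetherian ring $S$, hence finitely generated, so all the relevant Hilbert functions eventually agree with polynomials, and one obtains
\[
\sum_{i=1}^d (-1)^i HP(C_i,k) = \sum_{i=1}^d (-1)^i HP(H_i(\mathcal{C}),k).
\]

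Next I would evaluate the left-hand side termwise. A face $\xi \prec \Sigma$ of dimension $i$ is a cone with apex at the origin, so its linear span $\overline{\xi}$ is an $i$-dimensional linear subspace of $\Q^d$; thus $I_\xi$ is generated by $d-i$ independent linear forms and $S/I_\xi$ is, as a graded ring, a polynomial ring in $i$ variables, with $HP(S/I_\xi,k) = {k+i-1 \choose i-1}$. Hence $HP(C_i,k) = f_i^0 {k+i-1 \choose i-1}$ for $i = 1, \dots, d$. Substituting this into the identity above, isolating the $i=d$ term on the right (where $H_d(\mathcal{C}) = C^0(\Sigma)$ by construction), multiplying through by $(-1)^d$, and using $-(-1)^{d-i} = (-1)^{d-1-i}$ to fix the sign on the lower homology terms yields the claimed formula.

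There is no real obstacle; the proof is essentially additivity of Hilbert functions plus sign bookkeeping. The two points I would be careful about are that each $S/I_\xi$ appearing in $\mathcal{C}$ really is an honest polynomial ring — this is what makes the combinatorial term ${k+i-1 \choose i-1}$ exact, and it would fail for a nonlinear span — and that the $H_i(\mathcal{C})$ are finitely generated, so that their Hilbert polynomials exist in the first place.
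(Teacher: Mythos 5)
Your proof is correct and is exactly the argument the paper intends: the paper simply says ``taking Euler characteristics yields'' the formula, and your write-up (additivity of Hilbert functions across the complex, $HP(S/I_\xi,k)={k+i-1\choose i-1}$ because each $\overline{\xi}$ is an $i$-dimensional linear subspace, then the sign bookkeeping isolating $H_d(\mathcal{C})=C^0(\Sigma)$) is just the standard elaboration of that one-line proof. No gaps.
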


\pagebreak

\begin{lem}\label{CdimPrimes}
For $i \ge 1$, $H_{d-i}(\mathcal{C})$ is supported in codimension at least $i+1$.
\end{lem}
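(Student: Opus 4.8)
The plan is to localize the complex $\mathcal{C}$ at a prime $\mathfrak{p}$ of codimension $\le i$ and show $H_{d-i}(\mathcal{C})_{\mathfrak{p}} = 0$. Since each term $\bigoplus_{\xi \in \Sigma_k^0} S/I_\xi$ of $\mathcal{C}$ is a direct sum of quotients by the linear ideals $I_\xi$, localizing at $\mathfrak{p}$ kills the summand for $\xi$ unless $I_\xi \subseteq \mathfrak{p}$, i.e. unless $\overline{\xi} \supseteq V(\mathfrak{p})$ — equivalently unless $\mathfrak{p}$ contains the linear ideal $I_\xi$, which forces $\codim I_\xi = d - \dim\xi \le \codim\mathfrak{p} \le i$, so $\dim \xi \ge d-i$. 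Thus after localizing, the complex $\mathcal{C}_{\mathfrak{p}}$ has its summands concentrated in homological degrees $d, d-1, \dots, d-i+1$ (coming from faces of dimension $d, d-1, \dots, d-i+1$), and in particular the term in homological degree $d-i$ vanishes. Hence $H_{d-i}(\mathcal{C})_{\mathfrak{p}}=0$ for every $\mathfrak{p}$ of codimension $\le i$, which is exactly the assertion that $H_{d-i}(\mathcal{C})$ is supported in codimension $\ge i+1$.

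The one point needing care is the claim that localization annihilates the summand $S/I_\xi$ exactly when $I_\xi \not\subseteq \mathfrak{p}$: this is immediate because if $I_\xi \not\subseteq \mathfrak{p}$ then some element of $I_\xi$ becomes a unit in $S_{\mathfrak{p}}$, forcing $(S/I_\xi)_{\mathfrak{p}}=0$, while if $I_\xi \subseteq \mathfrak{p}$ then $(S/I_\xi)_{\mathfrak{p}}$ is nonzero but this summand sits in homological degree $\dim\xi \ge d-i$. One should also note that $\partial_k$ is built from the cellular differential and respects this structure, so $\mathcal{C}_{\mathfrak{p}}$ is genuinely the truncation of $\mathcal{C}$ to summands indexed by faces whose linear span contains $V(\mathfrak{p})$; no differential maps into homological degree $d-i$ from a nonzero term, and the term in degree $d-i$ is itself zero.

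The main obstacle — really the only subtlety — is bookkeeping the dimension count: one must track that a face $\xi$ of dimension $k$ contributes in homological degree $k$ and has $\codim I_\xi = d-k$, so that $I_\xi \subseteq \mathfrak{p}$ with $\codim \mathfrak{p} \le i$ forces $k \ge d-i$, putting every surviving summand in homological degree $\ge d-i+1$ whenever $\codim \mathfrak{p} \le i$. Once this indexing is pinned down the vanishing is formal. This argument also makes transparent why the sharper statement (Theorem~1, that the codimension $i+1$ associated primes are linear) should hold: the only primes of codimension exactly $i+1$ in the support arise from intersections of the linear ideals $I_\xi$ for $\xi \in \Sigma_{d-i}^0$, and these intersections are again linear when the relevant local picture is forced, which is the content of the finer analysis in Theorem~\ref{minassPrimes}.
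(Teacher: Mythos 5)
There is a genuine gap, and it sits exactly at the point you dismiss as bookkeeping. Your localization argument correctly shows that after localizing at $\mathfrak{p}$ with $\codim \mathfrak{p}\le i$, the only surviving summands $(S/I_\xi)_{\mathfrak{p}}$ are those with $I_\xi\subseteq\mathfrak{p}$, hence $\dim\xi\ge d-i$. But from this you conclude that the surviving summands lie in homological degrees $d,\dots,d-i+1$ and that the degree $d-i$ term vanishes; that is off by one. A face $\beta\in\Sigma^0_{d-i}$ has $\codim I_\beta=i$, so when $\codim\mathfrak{p}=i$ and $\mathfrak{p}=I_\beta$ (equivalently $\overline{\beta}=V(\mathfrak{p})$), the summand $(S/I_\beta)_{\mathfrak{p}}$ in homological degree $d-i$ is nonzero. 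As written, your argument only proves $H_{d-i}(\mathcal{C})_{\mathfrak{p}}=0$ for $\codim\mathfrak{p}\le i-1$, i.e. support in codimension at least $i$, which is strictly weaker than the lemma.

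The case $\codim\mathfrak{p}=i$ with $\mathfrak{p}=I_\beta$ for some $\beta\in\Sigma^0_{d-i}$ is the real content of the lemma, and it cannot be handled by formal localization alone; this is where the paper uses the geometry of the fan. In the localized complex the degree $d-i-1$ term does vanish (those faces have codimension $i+1$), so every element of the degree $d-i$ term is a cycle, and one must show that the localized $\partial_{d-i+1}$ maps \emph{onto} the surviving summands. The paper's argument: each such interior face $\beta$ is contained in some $(d-i+1)$-face $\alpha$ (here the hereditary hypothesis enters), and the column of the localized differential indexed by $\alpha$ has entry $\pm1$ in the row of $\beta$; moreover, convexity guarantees that $\alpha$ cannot contain a second face $\beta'$ with $\overline{\beta'}=V(\mathfrak{p})$, so that column has a single surviving unit entry and surjectivity follows, giving $H_{d-i}(\mathcal{C})_{\mathfrak{p}}=0$. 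To repair your proof you must add this surjectivity argument (or an equivalent one); a purely numerical dimension count that never uses which faces contain which cannot yield it.
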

\begin{proof}
Since
\[
H_{d-i}(\mathcal{C}) = H \Big( \bigoplus\limits_{\alpha \in \Sigma_{d-i+1}}S/I_\alpha 
\stackrel{\partial_{d-i+1}}{\longrightarrow} \bigoplus\limits_{\beta \in \Sigma_{d-i}^0}\!\!\!\! S/I_{\beta}  
\stackrel{\partial_{d-i}}{\longrightarrow} \bigoplus\limits_{\gamma \in \Sigma_{d-i-1}^0}\!\!\!\! S/I_{\gamma} \Big),
\]
localizing at a prime $P$ of codimension $i$ forces the vanishing of the middle
term, unless $P = I_{\beta}$. But in this case, there exists $\alpha \in \Sigma_{d-i+1}^0$ with 
$\beta \subseteq \alpha$, and so the localized map $\partial_{d-i+1}$ is onto: though there can be
several distinct $\beta_i \in \Sigma_{d-i}^0$ with $\overline{\beta_i} = V(P)$, convexity implies
that a $d-i+1$ face $\alpha$ containing one such $\beta_i$ cannot contain any others. Thus 
$H_{d-i}(\mathcal{C})_P=0$ if $codim(P) \le i$. 
\end{proof}
\begin{lem}\label{LinPrimes}
All associated primes of $H_{d-i}(\mathcal{C})$ of codimension $i+1$ are linear.
\end{lem}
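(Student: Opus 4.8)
The plan is to localize the complex at a codimension-$(i+1)$ prime $P$ and show that if $P$ is not a linear ideal then $P$ cannot be associated to $H_{d-i}(\mathcal{C})$, which is precisely the assertion. Using the presentation
\[
H_{d-i}(\mathcal{C}) = H \Big( \bigoplus\limits_{\alpha \in \Sigma_{d-i+1}}S/I_\alpha \stackrel{\partial_{d-i+1}}{\longrightarrow} \bigoplus\limits_{\beta \in \Sigma_{d-i}^0} S/I_{\beta} \stackrel{\partial_{d-i}}{\longrightarrow} \bigoplus\limits_{\gamma \in \Sigma_{d-i-1}^0} S/I_{\gamma} \Big)
\]
recorded in the proof of \lemref{CdimPrimes}, I would first localize at such a $P$. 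Because $\codim I_\gamma = i+1$ and each $I_\gamma$ is linear, $(S/I_\gamma)_P \neq 0$ would force $I_\gamma = P$, which is impossible when $P$ is nonlinear; so the right-hand term vanishes after localization and $H_{d-i}(\mathcal{C})_P = \coker(\partial_{d-i+1})_P$. (The extreme cases $i=1$, where the left-hand term is $\bigoplus_{\sigma}S$, and $i=d-1$, where the right-hand term is absent, are treated the same way.)

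The crux is a \emph{rigidity} statement forced by nonlinearity of $P$. If no $\beta \in \Sigma_{d-i}^0$ survives localization there is nothing to prove, so assume some do; each such $\beta$ has $I_\beta \subseteq P$, i.e. $V(P) \subseteq \overline{\beta}$ with $\codim \overline{\beta} = i$. Set $W = \bigcap \overline{\beta}$, the intersection over these $\beta$. Then $W$ is linear and $V(P) \subseteq W$, so $\codim W \in \{i,i+1\}$; if $\codim W = i+1$ then $V(P)=W$, forcing $P$ to be linear, contrary to hypothesis. Hence $\codim W = i$, and since each $\overline{\beta}$ is a codimension-$i$ linear space containing $W$, we get $\overline{\beta} = W$ for every surviving $\beta$. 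Writing $I_W \subseteq P$ for the (linear, codimension $i$) ideal of $W$, the localized middle term $\bigoplus_{\beta}(S/I_W)_P$ is then a direct sum of copies of the domain $S_P/I_W S_P$, so its only $S_P$-associated prime is $I_W S_P$.

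To conclude I would pin down $\im(\partial_{d-i+1})_P$. For $\alpha \in \Sigma_{d-i+1}$ with $I_\alpha \subseteq P$, the convexity argument from the proof of \lemref{CdimPrimes} shows $\alpha$ has at most one facet of linear span $W$; as every surviving facet of $\alpha$ has span $W$, the $\alpha$-component of $(\partial_{d-i+1})_P$ either surjects onto a single summand $(S/I_W)_P$ or is zero. Thus $\im(\partial_{d-i+1})_P$ is a direct summand of $\bigoplus_{\beta}(S/I_W)_P$, so $\coker(\partial_{d-i+1})_P$ embeds into $\bigoplus_{\beta}(S/I_W)_P$ and $\mathrm{Ass}_{S_P}(H_{d-i}(\mathcal{C})_P) \subseteq \{I_W S_P\}$. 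Since $\codim I_W = i < i+1 = \codim P$, the ideal $PS_P$ is not in this set, so $P \notin \mathrm{Ass}(H_{d-i}(\mathcal{C}))$, as required. The one genuinely nontrivial step is the rigidity in the middle paragraph, namely that nonlinearity of $P$ pins all the surviving faces $\beta$ to a common linear span $W$; after that the argument is routine localization bookkeeping together with the convexity fact already established in \lemref{CdimPrimes}.
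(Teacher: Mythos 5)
Your argument is correct and follows essentially the same route as the paper: localize at a nonlinear codimension-$(i+1)$ prime $P$, use the convexity fact that a $(d-i+1)$-face has at most one facet with a given codimension-$i$ span, and conclude via the resulting unit columns that $P$ cannot be associated. The only real difference is in the endgame: the paper notes that the localized $\partial_{d-i+1}$ is actually surjective (every surviving $\beta$ is a facet of some $\alpha$), so $H_{d-i}(\mathcal{C})_P=0$ outright, while you settle for the weaker but sufficient statement that the cokernel is a direct sum of copies of $(S/I_W)_P$, whose only associated prime is $I_WS_P\neq PS_P$; your explicit rigidity step (all surviving $\overline{\beta}$ equal a single $W$) is precisely what the paper's terse phrase about $P$ containing exactly $i$ linear forms with $\langle f_1,\ldots,f_i\rangle = I_\beta$ encodes.
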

\begin{proof}
Let $P$ be an associated prime of codimension $i+1$, minimally
generated by $\langle f_1,\ldots, f_m \rangle$. If less than $i+1$ of the $f_j$ are linear,
then Lemma~\ref{CdimPrimes} shows 
\[
\Bigg( \bigoplus\limits_{\beta \in \Sigma_{d-i}^0}\!\!\!\! S/I_{\beta} \Bigg)_P =0,
\]
unless $P$ contains
exactly $i$ linear forms such that $\langle f_1,\ldots, f_i \rangle = I_{\beta}$, and in 
this case, the localization of $\partial_{d-i+1}$ is surjective. So if $H_{d-i}(\mathcal{C})_P \ne 0$,
then $P$ must have at least $i+1$ minimal generators which are linear. Since $\codim(P)=i+1$,
$P= \langle l_1,\ldots, l_{i+1} \rangle$.
\end{proof}
\begin{defn}\label{defn:dualG2}
Let $\Sigma$ be a $d-$dimensional fan, and $\xi$ a $\codim(i+1)$ linear subspace.
Define a graph $G_\xi(\Sigma)$, with a vertex 
$v_\beta$ for every $\beta \in \Sigma_{d-i}^0$ with $\xi \in \overline{\beta}$ and an 
edge $\overline{v_{\beta}v_{\beta'}}$ if there exists $\alpha \in \Sigma_{d-i+1}^0$ with 
$\partial_{d-i+1}(\alpha)= \beta \pm \beta' + \cdots$ and $\xi = \overline{\beta} \cap \overline{\beta'}.$
\end{defn}
\begin{thm}\label{minassPrimes}
For $\xi$ a linear subspace of codimension $i+1$, $I_{\xi}$ is an associated prime of 
$H_{d-i}(\mathcal{C})$ iff $G_\xi(\Sigma) = \amalg \Delta_j$ has a component 
$\Delta_j$ such that the following two conditions hold:
\begin{enumerate}
\item $\Delta_j$ does not correspond to a loop around $\gamma \in \Sigma_{d-i-1}^0$.
\item $\Delta_j$ has no vertex of valence one.
\end{enumerate}
\end{thm}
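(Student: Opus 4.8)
The plan is to detect $I_\xi$ by localizing: set $P=I_\xi$. Since by Lemmas~\ref{CdimPrimes} and~\ref{LinPrimes} every associated prime of $H_{d-i}(\mathcal C)$ has codimension $\ge i+1$ and the codimension $i+1$ ones are linear, $I_\xi\in\mathrm{Ass}\,H_{d-i}(\mathcal C)$ iff $PS_P\in\mathrm{Ass}_{S_P}H_{d-i}(\mathcal C)_P$, iff $\depth_{S_P}H_{d-i}(\mathcal C)_P=0$, iff $H_{d-i}(\mathcal C)_P$ has a nonzero socle. So the task is to compute the socle of the middle homology of the localized complex
\[
\bigoplus_{\alpha\in\Sigma_{d-i+1}^0}\!\!(S/I_\alpha)_P\;\longrightarrow\;\bigoplus_{\beta\in\Sigma_{d-i}^0}\!\!(S/I_\beta)_P\;\longrightarrow\;\bigoplus_{\gamma\in\Sigma_{d-i-1}^0}\!\!(S/I_\gamma)_P.
\]
First I would record the surviving data: $(S/I_\tau)_P\ne 0$ exactly when $\overline\tau\supseteq\xi$, so the surviving $\beta$-summands are precisely the vertices of $G_\xi(\Sigma)$, each a one-dimensional regular local ring $B_\beta:=(S/I_\beta)_P$ with residue field $k:=\kappa(P)$; the surviving $\gamma$-summands are those with $\overline\gamma=\xi$, each $\cong k$; and by convexity (an interior $(d-i+1)$-face $\alpha$ with $\xi\subseteq\overline\alpha$ has exactly two facets whose spans contain $\xi$, and those spans intersect in $\xi$) the nonzero columns of the localized $\partial_{d-i+1}$ are, up to units, the incidence relations $\pm\beta\mp\beta'$ indexed by the edges of $G_\xi(\Sigma)$.

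Next I would pass to components. The middle term and both differentials respect the decomposition $G_\xi(\Sigma)=\coprod\Delta_j$ on the $\beta$-index, so $H_{d-i}(\mathcal C)_P=\bigoplus_j H(\Delta_j)$, with $H(\Delta_j)$ the middle homology of the associated subcomplex. Two structural facts drive the analysis. First, applying $\partial_{d-i}\circ\partial_{d-i+1}=0$ to the relation attached to an edge shows that $(\partial_{d-i})_P$ is, up to sign, constant along edges; since a pointed cone meets the hyperplane $\xi$ in at most one face, $(\partial_{d-i})_P(\beta)$ is either $0$ or $\pm\gamma_0$ for a single $\gamma_0$ with $\overline{\gamma_0}=\xi$, and hence on a connected $\Delta_j$ the map is uniformly zero, or uniformly $\pm\gamma_0$ with $\gamma_0\prec\beta$ for every $v_\beta\in\Delta_j$ — the latter being exactly a ``loop around $\gamma_0$'' as in condition~(1). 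Second, if $\Delta_j$ contains any edge $\beta-\beta'$ then $\overline\beta\cap\overline{\beta'}=\xi$ forces $\bigcap_{v_\beta\in\Delta_j}\overline\beta=\xi$, so $\coker(\partial_{d-i+1}|_{\Delta_j})=\big(\bigoplus_{v_\beta\in\Delta_j}B_\beta\big)/\im$ is cyclic with annihilator the linear ideal $\sum_{v_\beta\in\Delta_j}I_\beta=I_\xi=P$; hence $\coker(\partial_{d-i+1}|_{\Delta_j})\cong k$.

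Then the case analysis closes the proof. If $\Delta_j$ is a loop around $\gamma_0$, the cyclic cokernel $\cong k$ is carried isomorphically by $(\partial_{d-i})_P$ onto $k\cdot\gamma_0$, so $H(\Delta_j)=0$. If $\Delta_j$ has a valence-one vertex $v_\beta$, I would argue that $\beta$ cannot straddle $\xi$ — here interiority of $\beta$ and the hereditary hypothesis enter: if $\xi$ met the relative interior of $\beta$, the transverse fan of $\Sigma$ along $\beta$ would be complete and $v_\beta$ would pick up a second incident edge — so $\xi$ supports $\beta$, $\beta\cap\xi\in\Sigma_{d-i-1}^0$, $(\partial_{d-i})_P(\beta)\ne0$; by constancy the induced map $k\cong\coker(\partial_{d-i+1}|_{\Delta_j})\to\bigoplus_\gamma k$ is injective, so again $H(\Delta_j)=0$. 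Finally, if $\Delta_j$ satisfies both (1) and (2): (2) forces minimal valence $\ge2$ on a component with more than one vertex, hence a cycle and in particular an edge, so $\coker(\partial_{d-i+1}|_{\Delta_j})\cong k$; (1) places us in the uniformly-zero alternative, so $(\partial_{d-i})_P|_{\Delta_j}=0$ and $H(\Delta_j)\cong k$ is a nonzero socle class. Summing over $j$ gives the equivalence.

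I expect the main obstacle to be precisely the combinatorial half of the first structural fact and the valence-one claim: translating ``$(\partial_{d-i})_P$ nonzero on a component'' into exactly the loop-around-$\gamma$ configuration, and ruling out a valence-one vertex whose $\beta$ either straddles $\xi$ or whose facet in $\xi$ has slipped onto the boundary of $|\Sigma|$ (one should also check $G_\xi(\Sigma)$ has no stray isolated vertices). All of these rest on convexity of the cones together with the hereditary hypothesis controlling the local structure of $\Sigma$ transverse to $\xi$, and pinning down these incidence arguments — with the correct accounting of which faces actually lie in $\Sigma_{d-i-1}^0$ — is where the real work lies.
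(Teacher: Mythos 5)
Your overall route --- localize at $P=I_\xi$, keep only the summands whose spans contain $\xi$, decompose the middle term along the components of $G_\xi(\Sigma)$, compute the cokernel of the localized $\partial_{d-i+1}$ componentwise, then test $\partial_{d-i}$ --- is the paper's route, and your dichotomy extracted from $\partial_{d-i}\circ\partial_{d-i+1}=0$ (each component maps uniformly to zero, or uniformly onto a single $\gamma_0$, which is the ``loop around $\gamma_0$'' case) is a tidy repackaging of the paper's Case 1. The genuine gap is in your description of the localized $\partial_{d-i+1}$: convexity gives that a surviving $\alpha$ has \emph{at most} two facets $\beta$ with $\xi\subseteq\overline{\beta}$ that lie in $\Sigma_{d-i}^0$, not exactly two. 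Columns with a single surviving entry do occur (the second $\xi$-containing facet may fail to exist, or may be a boundary facet and hence not index a row); after localization that entry is a unit, so such a column kills the whole summand $(S/I_\beta)_P$ and with it the cyclic cokernel of the component through $v_\beta$. In the paper's proof these single-entry columns are exactly what condition (2) records: a component with a valence-one vertex dies because of such a column of $\partial_{d-i+1}$, not because of $\partial_{d-i}$.

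Your substitute mechanism for the valence-one case is false, and it breaks the positive direction too. Take $d=3$, $i=1$, $\xi=\mathrm{span}\{(1,-1,0)\}$, and the hereditary fan with maximal cones $\sigma_1=\cone\{e_1,e_2,u_1,u_2\}$ where $u_1=(1,-1,1)$, $u_2=(-1,1,1)$, $\sigma_2=\cone\{e_1,e_2,-e_3\}$, $\sigma_3=\cone\{u_1,u_2,(-1,-1,1)\}$. The interior $2$-faces are $\tau=\cone\{e_1,e_2\}$ and $\tau'=\cone\{u_1,u_2\}$, with $\overline{\tau}\cap\overline{\tau'}=\xi$, so $G_\xi(\Sigma)$ is a single edge and both vertices have valence one; no ray of $\Sigma$ spans $\xi$, indeed $\xi\cap\tau=\{0\}$, so the localized $\partial_{2}$ is zero --- even granting your no-straddling claim, $\beta\cap\xi$ can be a face of too small a dimension, so ``$\xi$ supports $\beta$'' does not yield $\beta\cap\xi\in\Sigma_{d-i-1}^0$ or $(\partial_{d-i})_P(\beta)\neq0$. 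Your premises (all surviving columns are edge relations; valence one forces $\partial_{d-i}\neq0$) would give $H_2(\mathcal{C})_P\simeq\kappa(P)\neq0$ here, whereas in fact $H_2(\mathcal{C})_P=0$: the columns of $\sigma_2$ and $\sigma_3$ each have a single surviving unit entry and already surject onto $(S/I_\tau)_P\oplus(S/I_{\tau'})_P$, exactly as in the paper's argument. Symmetrically, your claim that a component satisfying (1)--(2) contributes $\kappa(P)$ presupposes that no single-entry column meets such a component, i.e.\ the equivalence ``valence-one vertex $\Leftrightarrow$ single-entry column,'' which is the combinatorial heart of the theorem; you flag this as ``where the real work lies,'' but the mechanism you propose in its place does not supply it.
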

\begin{proof}
Localize $\mathcal{C}$ at ${I_{\xi}}$, and write the localized complex as $\mathcal{C}_\xi$. 
Case 1: Suppose some (possibly several) $\gamma \in \Sigma_{d-i-1}^0$
have $\gamma \in \xi$. In the localized complex, we have
\[
\ldots \stackrel{\partial_{d-i+1}}{\longrightarrow} 
\bigoplus\limits_{\stackrel{\xi \prec \overline{\beta}}{\beta \in \Sigma_{d-i}^0}}\!\!\!\! (S/I_{\beta})_{I_\xi}  
\stackrel{\partial_{d-i}}{\longrightarrow} \bigoplus\limits_{\stackrel{\xi =\overline{\gamma}}{\gamma \in \Sigma_{d-i-1}^0}}\!\!\!\!\! (S/I_{\gamma})_{I_\xi} \longrightarrow 0.
\]
A codimension $i$ face $\beta$ can contain at most one facet $\gamma$ of the form above, and 
choosing any such facet shows that (in the localization) $\partial_{d-i}$ is surjective. This 
yields
\[
0 \longrightarrow H_{d-i}(\mathcal{C}_\xi) \longrightarrow \Bigg(\!\!\bigoplus\limits_{\stackrel{\xi \prec \overline{\beta}}{\beta \in \Sigma_{d-i}^0}}\!\!\!\! (S/I_{\beta})_{I_\xi} \Bigg)/im(\partial_{d-i+1}) 
\stackrel{\partial_{d-i}}{\longrightarrow} \bigoplus\limits_{\stackrel{\xi =\overline{\gamma}}{\gamma \in \Sigma_{d-i-1}^0}}\!\!\!\!\! (S/I_{\gamma})_{I_\xi} \longrightarrow 0.
\]
The map $\partial_{d-i}$ splits in $\mathcal{C}_\xi$. To see this, 
note that in $\mathcal{C}_\xi$, two types of codimension $i$ face $\beta$ 
can appear: those which actually have a $\codim(i+1)$ face $\gamma$ lying in $\xi$, and 
those which do not. In the first situation, fix $\gamma \in \Sigma^0_{d-i-1}$ such 
that $\gamma \in \xi$. By convexity, there exists a codimension $i-1$ face
$\alpha$ with $\partial_{d-i+1}(\alpha)=\beta_1 \pm \beta_2 \pm \cdots$, and 
$\gamma = \beta_1 \cap \beta_2$. Looping around $\gamma$ yields an isomorphism 
$\oplus_{\gamma \in \beta \in \Sigma_{d-i}^0}\!\!\!\! (S/I_{\beta})_{I_\xi} /im(\partial_{d-i+1}) 
\simeq (S/I_\gamma)_{I_\xi} $, and $H_{d-i}(\mathcal{C}_\xi)$ vanishes. 
In the latter situation, the localized map $\partial_{d-i}$ is zero, so
\begin{equation}\label{Hpres}
H_{d-i}(\mathcal{C}_\xi) \simeq \bigoplus\limits_{\stackrel{\xi \prec \overline{\beta}, \beta \in \Sigma_{d-i}^0}{\xi \ne \overline{\gamma} \mid \gamma \in \partial(\beta)}} \!\!\!\! (S/I_{\beta})_{I_\xi} /\im(\partial_{d-i+1}).
\end{equation}
\vskip -.05in
Case 2: no $\gamma \in \Sigma_{d-i-1}^0$ satisfies $\gamma \in \xi$.
Since no $\gamma \in \xi$, 
\[
\Big( \bigoplus\limits_{\gamma \in \Sigma_{d-i-1}^0}\!\!\!\!\! S/I_{\gamma}\Big)_{I_\xi}=0.
\]
Hence, $H_{d-i}(\mathcal{C}_\xi)$ is as in Equation~\ref{Hpres}. By convexity, a codimension $i-1$ face $\alpha$ can have
at most two facets $\beta_i$ with $\xi \prec \overline{\beta_i}$. Thus, the localized map $\partial_{d-i+1}$
will have columns with either one or two nonzero entries. In the former case, the corresponding 
generator of $S/I_{\beta}$  is killed by $\partial_{d-i+1}$. In the latter case, quotienting by a 
column with two nonzero entries gives a cokernel of the form $S/I_{\beta_1}+ I_{\beta_2}$, where
$\beta_1$ and $\beta_2$ are the two codimension $i$ faces with $\xi \prec \overline{\beta_i}$.
Thus, $I_{\beta_1}+ I_{\beta_2} = I_\xi$. If $G_\xi(\Sigma) = \amalg \Delta_j$, then
a component $\Delta_j$ has a vertex with valence one exactly when the localized 
map $\partial_{d-i+1}$ has column with single nonzero entry. Let $a_{\xi}$ denote the number of $\Delta_i$ with no vertex of valence one. 
We have shown $H_{d-i}(\mathcal{C}_\xi) \simeq (S/I_{\xi})_{I_\xi}^{a_{\xi}},$
and the result follows. 
\end{proof}
\begin{cor}\label{Hterm}
Let $L_i$ denote the Grassmannian of $\codim(i)$ linear subspaces. Associate to a point of $L_i$ 
the corresponding subspace, let $a_{\xi}$ be as above, and 
\[
\alpha_i = \sum\limits_{\xi \in L_{i+1}}  a_{\xi}
\]
Then
\[
HP(H_{d-i}(\mathcal{C}),k) = \alpha_i \cdot {k+d-i-2 \choose d-i-2} + O(k^{d-i-3}).
\] 
\end{cor}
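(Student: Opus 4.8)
The plan is to read off the leading term of the Hilbert polynomial of $H_{d-i}(\mathcal{C})$ from the structure theory already in place. By Lemma~\ref{CdimPrimes}, $H_{d-i}(\mathcal{C})$ is supported in codimension at least $i+1$, so its Hilbert polynomial has degree at most $d-i-2$ (the affine dimension of a codimension $i+1$ subspace in $\mathbb{A}^d$ is $d-i-1$, giving a module whose Hilbert polynomial has degree $d-i-2$). Hence the leading coefficient is governed entirely by the codimension $i+1$ part of the support, and by Lemma~\ref{LinPrimes} and Theorem~\ref{minassPrimes} that part consists precisely of the finitely many linear subspaces $\xi$ of codimension $i+1$ for which $a_\xi>0$. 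So the first step is to invoke additivity of Hilbert polynomials along a filtration (or a prime filtration) of $H_{d-i}(\mathcal{C})$: modulo lower-dimensional pieces, $H_{d-i}(\mathcal{C})$ looks like a direct sum over the relevant $\xi$ of copies of $S/I_\xi$, and the multiplicity of $S/I_\xi$ is exactly the length $a_\xi$ of the localization $H_{d-i}(\mathcal{C})_{I_\xi}$, which the proof of Theorem~\ref{minassPrimes} identifies as $(S/I_\xi)_{I_\xi}^{a_\xi}$.

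Concretely, I would argue as follows. For each codimension $i+1$ linear $\xi$, the localization $H_{d-i}(\mathcal{C})_{I_\xi} \simeq (S/I_\xi)_{I_\xi}^{a_\xi}$ by the computation in Case 2 of the proof of Theorem~\ref{minassPrimes}. Thus the $I_\xi$-primary component of $H_{d-i}(\mathcal{C})$ contributes, in the associated graded of a prime filtration, exactly $a_\xi$ copies of $S/I_\xi$; any remaining filtration quotients are supported on primes of codimension $\ge i+2$ and therefore contribute only to lower-order terms $O(k^{d-i-3})$. Since $S/I_\xi$ is a polynomial ring in $d-(i+1) = d-i-1$ variables, $HP(S/I_\xi,k) = \binom{k+d-i-2}{d-i-2}$, which has leading term $\frac{k^{d-i-2}}{(d-i-2)!}$ independent of $\xi$. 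Summing over all $\xi \in L_{i+1}$ with the multiplicity $a_\xi$ gives leading coefficient $\sum_{\xi \in L_{i+1}} a_\xi = \alpha_i$, yielding
\[
HP(H_{d-i}(\mathcal{C}),k) = \alpha_i \cdot \binom{k+d-i-2}{d-i-2} + O(k^{d-i-3}),
\]
as claimed. One should also note that only finitely many $\xi$ have $a_\xi \ne 0$ — these are among the finitely many linear spans $\overline{\beta}$ for $\beta$ ranging over the finitely many cones in $\Sigma_{d-i}^0$ — so the sum defining $\alpha_i$ is finite and the statement makes sense.

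The main obstacle is making the "modulo lower-dimensional pieces" step rigorous: one needs that $H_{d-i}(\mathcal{C})$ is a finitely generated graded $S$-module (clear from Definition~\ref{compC}, since each term of $\mathcal{C}$ is finitely generated and $S$ is Noetherian) so that a finite prime filtration exists, and that in such a filtration the number of quotients isomorphic (up to shift) to $S/I_\xi$ equals $\operatorname{length}_{S_{I_\xi}} H_{d-i}(\mathcal{C})_{I_\xi} = a_\xi$. This is the standard fact that for a prime $P$ of dimension equal to $\dim M$, the multiplicity of $S/P$ in any prime filtration of $M$ is $\operatorname{length}_{S_P} M_P$; here all the relevant $P = I_\xi$ have the same dimension $d-i-1$, which is exactly $\dim H_{d-i}(\mathcal{C})$ by Lemma~\ref{CdimPrimes} (provided some $a_\xi \ne 0$; if all vanish the module is supported in codimension $\ge i+2$ and both sides are $O(k^{d-i-3})$, so the formula still holds with $\alpha_i = 0$). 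With that bookkeeping in hand the degree-$(d-i-2)$ coefficient is additive over the filtration, only the top-dimensional quotients contribute, and the computation above finishes the proof.
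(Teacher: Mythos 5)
Your argument is correct and follows exactly the route the paper intends (the corollary is stated without proof, as an immediate consequence of Theorem~\ref{minassPrimes}): the local computation $H_{d-i}(\mathcal{C})_{I_\xi}\simeq (S/I_\xi)_{I_\xi}^{a_\xi}$ plus the codimension bound of Lemma~\ref{CdimPrimes}, combined with the standard prime-filtration/multiplicity bookkeeping, gives the leading term $\alpha_i\binom{k+d-i-2}{d-i-2}$. Your added care about finiteness of the sum and the degenerate case $\alpha_i=0$ is fine and does not change the substance.
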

\begin{cor}\label{Topterm}
$\alpha_1 = \big(\sum\limits_{\xi \in L_2} \rank H_1(G_\xi(\Sigma), \Z)\big) -f_{d-2}^0$.
\end{cor}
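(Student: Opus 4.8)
The plan is to localize at codimension two and reduce the statement to a combinatorial identity for each individual graph $G_\xi(\Sigma)$. Corollary~\ref{Hterm} already gives $\alpha_1 = \sum_{\xi \in L_2} a_\xi$. Since every $\gamma \in \Sigma_{d-2}^0$ spans a codimension two subspace $\overline\gamma$, the assignment $\gamma \mapsto \overline\gamma$ partitions $\Sigma_{d-2}^0$ into the fibers over $L_2$, so $\sum_{\xi \in L_2}|\{\gamma \in \Sigma_{d-2}^0 : \overline\gamma = \xi\}| = f_{d-2}^0$. It therefore suffices to prove, for each fixed $\xi \in L_2$,
\[
a_\xi = \rank H_1(G_\xi(\Sigma),\Z) - |\{\gamma \in \Sigma_{d-2}^0 : \overline\gamma = \xi\}|,
\]
and then to sum over $L_2$.

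First I would show that $G_\xi(\Sigma)$ has maximum valence two: a vertex $v_\beta$ corresponds to some $\beta \in \Sigma_{d-1}^0$, hence to a common facet of exactly two maximal cones, and by convexity (as in the proof of Theorem~\ref{minassPrimes}) each of those two cones has at most two facets whose linear span contains $\xi$, so it contributes at most one edge at $v_\beta$. Consequently $G_\xi(\Sigma)$ is a disjoint union of isolated vertices, paths and cycles, and $\rank H_1(G_\xi(\Sigma),\Z)$ is exactly the number of its cycle components. Next I would reread the proof of Theorem~\ref{minassPrimes} to pin down the homological meaning of $a_\xi$: a localized column of $\partial_d$ with a single nonzero entry kills the generator of the corresponding $(S/I_\beta)_{I_\xi}$, so paths and isolated vertices contribute nothing to $H_{d-1}(\mathcal{C})_{I_\xi}$; a component that is a loop around some $\gamma$ is annihilated by the localized $\partial_{d-1}$; and each remaining cycle contributes one copy of $(S/I_\xi)_{I_\xi}$. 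Hence $a_\xi$ equals the number of cycle components of $G_\xi(\Sigma)$ that are \emph{not} loops around a face $\gamma \in \Sigma_{d-2}^0$, and the displayed identity reduces to the assertion that the loop-around-$\gamma$ components correspond bijectively to $\{\gamma \in \Sigma_{d-2}^0 : \overline\gamma = \xi\}$.

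To establish that bijection I would argue in both directions. Given $\gamma \in \Sigma_{d-2}^0$ with $\overline\gamma = \xi$: since $\gamma$ is interior, its transverse fan in $\Q^d/\overline\gamma \cong \Q^2$ is complete, so every codimension one face $\beta \supset \gamma$ is a wall between two maximal cones (hence lies in $\Sigma_{d-1}^0$) and the dual graph of that transverse fan is a cycle; consecutive walls $\beta,\beta'$ satisfy $\beta\cap\beta' = \gamma$, so $\overline\beta\cap\overline{\beta'} = \xi$ and $\partial_d(\sigma)=\beta\pm\beta'+\cdots$ for the maximal cone $\sigma$ between them, producing an edge of $G_\xi(\Sigma)$; and since both maximal cones through such a $\beta$ also contain $\gamma$, every edge at $v_\beta$ lies in this cycle, so it is a full component. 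Injectivity of $\gamma \mapsto (\text{loop around }\gamma)$ holds because a codimension one cone has at most one facet contained in $\xi$. Conversely, a cycle component of $G_\xi(\Sigma)$ that is a loop around some $\gamma$ forces $\gamma$ to be interior: if $\gamma$ lay on $\partial|\Sigma|$ its transverse fan would be a proper angular sector of $\Q^2$, its two extreme walls would each be a facet of a single maximal cone and therefore not vertices of $G_\xi(\Sigma)$, and the component would be a path, not a cycle. This yields the bijection, and hence the corollary.

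The hard part is the last step, and inside it the precise dictionary between ``interior'' and ``complete transverse fan'': one must be certain that every interior codimension two face produces an \emph{honest} cycle in $G_\xi(\Sigma)$ while every boundary one produces a path, and that distinct faces produce distinct cycles. All of this rests on the same convexity input already used in Lemmas~\ref{CdimPrimes} and~\ref{LinPrimes} and in Theorem~\ref{minassPrimes}; the genuinely new bookkeeping is keeping the loop-around-$\gamma$ components --- which vanish in $H_{d-1}(\mathcal{C})_{I_\xi}$ but still contribute to $\rank H_1$ --- separate from the cycles actually counted by $a_\xi$.
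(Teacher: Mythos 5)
Your argument is correct and is essentially the paper's proof: the paper disposes of this corollary in one line (``for $\xi \in L_2$ the components $\Delta_j$ are homeomorphic to circles or segments''), which is precisely your valence-two observation, with $\rank H_1(G_\xi(\Sigma),\Z)$ counting circle components and the loop components around interior $\gamma\in\Sigma_{d-2}^0$ summing to $f_{d-2}^0$ over $\xi\in L_2$. Your write-up simply makes explicit the bookkeeping (the bijection between interior codimension-two faces spanning $\xi$ and loop components, and the vanishing of path contributions) that the paper leaves implicit from the proof of Theorem~\ref{minassPrimes}.
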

\begin{proof}For $\xi \in L_2$, the components $\Delta_i$ are homeomorphic to circles or segments.
\end{proof}
\begin{cor}\label{3d}
For a fan $\Sigma \in \Z^3$, 
\[
HP(C^0(\Sigma),k) = f_3 \cdot {k+2 \choose 2}-f_2^0(k+1) + f_1^0 + \alpha_1.
\]
\end{cor}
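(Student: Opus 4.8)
The plan is to read everything off \propref{euler} once we put $d=3$, using \lemref{CdimPrimes} and \corref{Hterm} to control the two surviving homology terms. Since $\Sigma_3^0=\Sigma_3$ we have $f_3^0=f_3$, so the purely combinatorial sum in \propref{euler} collapses to
\[
\sum_{i=1}^{3}(-1)^{3-i}f_i^0\binom{k+i-1}{i-1}=f_3\binom{k+2}{2}-f_2^0(k+1)+f_1^0,
\]
and it remains only to evaluate $\sum_{i=1}^{2}(-1)^{2-i}HP(H_i(\mathcal{C}),k)=HP(H_2(\mathcal{C}),k)-HP(H_1(\mathcal{C}),k)$.

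For $H_1(\mathcal{C})=H_{d-2}(\mathcal{C})$, \lemref{CdimPrimes} with $i=2$ shows it is supported in codimension at least $3$; in the three-variable ring $S$ this forces dimension $0$, hence finite length, hence $HP(H_1(\mathcal{C}),k)=0$. For $H_2(\mathcal{C})=H_{d-1}(\mathcal{C})$, \lemref{CdimPrimes} with $i=1$ gives support in codimension at least $2$, so it has dimension at most $1$ and its Hilbert polynomial is a constant. That constant is supplied by \corref{Hterm} with $i=1$: the stated expansion is $\alpha_1\binom{k+d-3}{d-3}+O(k^{d-4})$, which for $d=3$ reads $\alpha_1\cdot 1+O(k^{-1})$, and since a Hilbert polynomial has no negative-degree tail this is exactly $\alpha_1$. (Equivalently, the constant is the total multiplicity of the $1$-dimensional support, and the localization computation in the proof of \thmref{minassPrimes} shows each $\xi\in L_2$ contributes length $a_\xi$, summing to $\alpha_1=\sum_{\xi\in L_2}a_\xi$.)

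Assembling the three pieces yields $HP(C^0(\Sigma),k)=f_3\binom{k+2}{2}-f_2^0(k+1)+f_1^0+\alpha_1$. This is a direct specialization, so I do not expect a genuine obstacle; the only points requiring attention are the sign and binomial-shift bookkeeping in \propref{euler} and the observation that in dimension $3$ the ``leading'' binomial in \corref{Hterm} is the constant $1$, which annihilates the error term and pins the contribution of $H_2(\mathcal{C})$ to precisely $\alpha_1$.
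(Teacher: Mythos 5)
Your proof is correct and follows the route the paper intends for this corollary: specialize \propref{euler} to $d=3$, kill $HP(H_1(\mathcal{C}),k)$ via the codimension bound of \lemref{CdimPrimes}, and identify the constant contribution of $H_2(\mathcal{C})$ as $\alpha_1$ via \corref{Hterm}. The sign and binomial bookkeeping all check out, so nothing further is needed.
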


\begin{exm}\label{exm:third}Consider a four-dimensional version of Example~\ref{exm:first}: 
place a small regular tetrahedron $T$ symmetrically inside a large regular tetrahedron $T'$, and connect corresponding vertices. The maximal faces are the 
convex hulls of the corresponding facets of $T$ and $T'$.  
$C^0(\Sigma)$ is a free $S$--module, with generators in degrees $\{0,1,2,3,4\}$, 
so the Hilbert polynomial is
\[
\sum\limits_{i=0}^4 {k+3-i \choose 3} = 5\cdot {k+3 \choose 3}-10 {k+2 \choose 2}+10{k+1 \choose 1} -5.
\]
If $v$ is the central point of symmetry, $G_v(\Sigma)$ is the 1-skeleton of a tetrahedron, 
so $HP(H_2(\mathcal{C}),k) = HP(S/I_v,k)=1$. Since $f_0 = 4$, this yields the expected constant term in the Hilbert polynomial.
\end{exm}

\section{Cartan-Eilenberg spectral sequence}\label{sec:three}
Write the complex $\mathcal{C}$ in the first quadrant, with $\mathcal{C}_i$ appearing 
in position $(i,0)$. Taking a Cartan-Eilenberg resolution for $\mathcal{C}$ and applying the
functor $Hom_S(\bullet, S)$ yields a first quadrant double complex. For the vertical 
filtration, ${}_v\!\E^1_{ij}$ is $\Ext_S^j(\mathcal{C}_i,S)$, so since the 
$S/I_{\tau}$ are complete intersections
$$
{}_v\!\E^1_{ij} = 
\begin{cases}
\bigoplus\limits_{\tau \in \Sigma_{i}^0}\!\!\!\! S(i)/I_{\tau}& \text{if $j=d-i$;}\\
0 & \text{otherwise.}
\end{cases}
$$
Thus ${}_v\!\E^1_{ij} = {}_v\!\E^{\infty}_{ij}$. Write $G_{ij}$ for the $j^{th}$ 
module in a free resolution of $H_i(\mathcal{C})$; from the Cartan-Eilenberg construction it follows that for the horizontal filtration
\[
{}_h\!\E^1_{ij} = \Hom_S(G_{ij},S).
\]
Thus, 
\[
{}_h\!\E^2_{ij} = \Ext_S^j(H_i(\mathcal{C}),S).
\]
\begin{exm}\label{4d}
Let $d=4$. Then the ${}_v\!\E^1_{i,j}$ terms are
\begin{center}
\begin{tabular}{|c|c|c|c|c|c|}
        \hline
  &i=0   & 1 & 2 & 3 & 4  \\
        \hline
j=3 &  & $\bigoplus\limits_{v \in \Sigma_{1}^0} R/I_{v}$ & & &  \\
        \hline
j=2 &  &  &$\bigoplus\limits_{u \in \Sigma_{2}^0} R/I_{\psi}$  & &  \\
        \hline
j=1 &  &  & &$\bigoplus\limits_{\tau \in \Sigma_{3}^0} R/I_{\tau}$   &  \\
        \hline
j=0 &  &  &  & & $\bigoplus\limits_{\sigma \in \Sigma_4} R $  \\
        \hline
\end{tabular}
\end{center}
By Lemma~\ref{CdimPrimes}, for $i \ge 1$, $H_{d-i}(\mathcal{C})$ is supported 
in codimension at least $i+1$, so for the horizontal filtration, 
the ${}_h\!\E^2_{ij}$ terms are
\begin{center}
\begin{tabular}{|c|c|c|c|c|c|}
        \hline
  &i=0   & 1 & 2 & 3 & 4  \\
        \hline
j=4 & &$\Ext^4(H_1(\mathcal{C}),S)$  &$\Ext^4(H_2(\mathcal{C}),S) $ &$\Ext^4(H_3(\mathcal{C}),S)$ &  $\Ext^4(C^0(\Sigma),S)$ \\
        \hline
j=3 &  &                           &$\Ext^3(H_2(\mathcal{C}),S)$ &$\Ext^3(H_3(\mathcal{C}),S)$ & $\Ext^3(C^0(\Sigma),S)$  \\
        \hline
j=2 &  &  &  &$\Ext^2(H_3(\mathcal{C}),S)$  &$\Ext^2(C^0(\Sigma),S)$   \\
        \hline
j=1 &  &  & &  & $\Ext^1(C^0(\Sigma),S)$ \\
        \hline
j=0 &  &  &  & & $C^0(\Sigma)^\vee$   \\
        \hline
\end{tabular}
\end{center}
\end{exm}
\begin{thm}\label{SSvanish}
If $H_i(\mathcal{C})$ is Cohen-Macaulay with $\codim(H_i(\mathcal{C}))= d-i+1$ for all $i<d$, 
then $C^0(\Sigma)$ is free.
\end{thm}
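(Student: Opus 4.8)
The plan is to show $\pdim_S C^0(\Sigma)=0$; since a finitely generated graded projective module over a polynomial ring is free, this gives the conclusion. Because $\pdim_S M=\max\{j:\Ext^j_S(M,S)\neq 0\}$ for finitely generated $M$ — the last nonzero term of a minimal free resolution of length $p$ produces $\Ext^p_S(M,S)\neq 0$ by Nakayama applied to the transpose of its differential — it is enough to prove $\Ext^j_S(C^0(\Sigma),S)=0$ for all $j\ge 1$. As $C^0(\Sigma)=H_d(\mathcal{C})$, these are precisely the entries ${}_h\!\E^2_{dj}$ of the horizontal filtration's $E_2$-page, and I would compare the two filtrations of the Cartan--Eilenberg double complex, whose common abutment is filtered by total degree $i+j$.

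The vertical filtration pins down the abutment. Each $\mathcal{C}_i=\bigoplus_{\tau\in\Sigma_i^0}S/I_\tau$ (with $\mathcal{C}_d=\bigoplus_\sigma S$) is a direct sum of complete intersections of codimension $d-i$, so ${}_v\!\E^1_{ij}=\Ext^j_S(\mathcal{C}_i,S)$ is supported in the single row $j=d-i$. Hence no differential can act, ${}_v\!\E^1={}_v\!\E^\infty$, and every surviving term lies in total degree $i+j=d$. Consequently the abutment vanishes in every total degree exceeding $d$.

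The horizontal filtration, under the hypothesis, is very sparse. By Lemma~\ref{CdimPrimes} one has $\Ext^j_S(H_i(\mathcal{C}),S)=0$ for $j\le d-i$ whenever $i<d$; the hypothesis that $H_i(\mathcal{C})$ is zero or Cohen--Macaulay of codimension $d-i+1$ adds $\Ext^j_S(H_i(\mathcal{C}),S)=0$ for $j\ge d-i+2$ as well, so for each $i<d$ the column $i$ of ${}_h\!\E^2$ lies entirely in the row $j=d-i+1$, where it is the canonical module $\omega_{H_i(\mathcal{C})}$; columns $i\le 0$ and $i\ge d+1$ are zero since $\mathcal{C}$ lives in homological degrees $1,\dots,d$. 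Now fix $j\ge 1$ and consider ${}_h\!\E^2_{dj}=\Ext^j_S(C^0(\Sigma),S)$. No differential can map into it: the source of a $d_r$ landing at $(d,j)$ lies in a column $i\ge d+1$, which is zero. No differential can map out of it: the target of a $d_r$ issuing from $(d,j)$ lies in the column $i'=d-r+1<d$ and the row $j+r$, while that column is supported only in the row $d-i'+1=r$, and $j+r=r$ forces $j=0$. So for $j\ge 1$ the entry ${}_h\!\E^2_{dj}$ is a permanent cycle, equal to ${}_h\!\E^\infty_{dj}$, which is a graded piece of the abutment in total degree $d+j>d$ and hence is $0$. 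Letting $j$ range over the positive integers proves $C^0(\Sigma)$ is free.

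The step I expect to demand the most care — the main obstacle — is the bidegree arithmetic in the last paragraph: one must commit to the precise direction of the differentials $d_r$ of this Cartan--Eilenberg spectral sequence and verify that, under the Cohen--Macaulay hypothesis, every entry $\Ext^{\ge 1}_S(C^0(\Sigma),S)$ really does support no differential at all (the borderline case $j=2$, where a $d_r$ would otherwise connect $\Ext^2_S(C^0(\Sigma),S)$ to the canonical modules $\omega_{H_i(\mathcal{C})}$, being the one that determines whether the hypothesis is exactly what is needed). Everything else — the degeneration of the vertical page and the reduction to $\Ext$-vanishing — is formal.
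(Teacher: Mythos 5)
Your proof is correct and follows essentially the same route as the paper: the vertical filtration degenerates at $E_1$ along $i+j=d$, forcing the total cohomology to be concentrated in degree $d$, while the Cohen--Macaulay hypothesis makes the columns $i<d$ of ${}_h\!\E^2$ live only in row $d-i+1$, so each $\Ext^j(C^0(\Sigma),S)$ with $j\ge 1$ is a permanent cycle and must vanish, giving $\pdim C^0(\Sigma)=0$. Your bidegree bookkeeping for the $d_r$ (and the reduction of freeness to $\Ext$-vanishing via a minimal resolution and Nakayama) just makes explicit what the paper's proof states tersely.
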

\begin{proof}
By local duality, $Ext^i(M,S)$ vanishes if $i<\codim(M)$ and $i>\pdim(M)$. The Cohen-Macaulay 
condition implies $\codim(M)=\pdim(M)$, which combined with the assumption that 
$\codim(H_i(\mathcal{C}))= d-i+1$ implies that
$$
{}_h\!\E^2_{ij} =
\begin{cases}
Ext^j(H_i(\mathcal{C}),S)& \text{if $j+i=d+1$;}\\
0 & \text{otherwise.}
\end{cases}
$$
Thus, the $d_2$ and higher differentials from the terms ${}_h\!\E^2_{dj}$ must
all vanish. Comparing to the vertical filtration shows that $H_i(Tot)$ is concentrated
in degree $d$, so that ${}_h\!\E^2_{dj} = Ext^j(C^0(\Sigma),S) =0$ for $j>0$. Note that
the condition $\codim(H_i(\mathcal{C}))= d-i+1$ can be weakened to include the case $H_i(\mathcal{C})=0$.
\end{proof}

\begin{cor}\label{euler2}
If $H_i(\mathcal{C})$ is Cohen-Macaulay with either $\codim(H_i(\mathcal{C}))= d-i+1$ or 
$H_i(\mathcal{C})= 0$ for all $i<d$, then $($letting $\alpha_0=\alpha_{-1}=0)$
\[
HP(C^0(\Sigma),k) = \sum_{i=1}^d (-1)^{d-i}(f_{i}^0+\alpha_{d-1-i})\cdot {k+i-1 \choose i-1}. 
\]
\end{cor}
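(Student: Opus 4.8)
The plan is to compute the Hilbert polynomials of the lower homology modules $H_i(\mathcal{C})$ under the stated hypothesis and feed them into \propref{euler}; note that by \thmref{SSvanish} the same hypothesis also makes $C^0(\Sigma)$ free. By \propref{euler},
\[
HP(C^0(\Sigma),k) = \sum_{i=1}^d (-1)^{d-i} f_{i}^0 \binom{k+i-1}{i-1} + \sum_{i=1}^{d-1}(-1)^{d-1-i} HP(H_i(\mathcal{C}),k),
\]
whose first sum already matches the $f$-part of the asserted formula. So it suffices to prove the sharper statement that, under the hypothesis,
\[
HP\bigl(H_{d-i}(\mathcal{C}),k\bigr) = \alpha_i \binom{k+d-i-2}{d-i-2}\qquad (1\le i\le d-1).
\]
Granting this, write $H_i(\mathcal{C})=H_{d-(d-i)}(\mathcal{C})$, so $HP(H_i(\mathcal{C}),k)=\alpha_{d-i}\binom{k+i-2}{i-2}$; substituting into the second sum and reindexing so that the binomial reads $\binom{k+i-1}{i-1}$ turns the weight into $\alpha_{d-1-i}$ and the sign $(-1)^{d-1-i}$ into $(-1)^{d-i}$, and merging with the first sum gives exactly $\sum_{i=1}^d(-1)^{d-i}(f_i^0+\alpha_{d-1-i})\binom{k+i-1}{i-1}$. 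The conventions $\alpha_0=\alpha_{-1}=0$ cover the extra terms $i=d,d-1$ and the degenerate case $d-i-2<0$ (i.e. $i=d-1$), where $H_1(\mathcal{C})$ is Cohen--Macaulay of codimension $d$, hence of finite length, contributing $0$ to the Hilbert polynomial.

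To establish the sharper statement, fix $i$ and put $M:=H_{d-i}(\mathcal{C})$; if $M=0$ there is nothing to prove, so assume $M$ is Cohen--Macaulay of codimension exactly $i+1$. Then $M$ is unmixed, so by \lemref{CdimPrimes} and \lemref{LinPrimes} every associated prime of $M$ is a linear ideal $I_\xi$ with $\xi\in L_{i+1}$, and the proof of \thmref{minassPrimes} gives $M_{I_\xi}\simeq(S/I_\xi)_{I_\xi}^{a_\xi}$, so $M$ has multiplicity $a_\xi$ along each component $V(I_\xi)$ of its support. I would upgrade this to an isomorphism $M\simeq\bigoplus_{\xi\in L_{i+1}}(S/I_\xi)^{a_\xi}$ of graded $S$-modules; since $S/I_\xi$ is a polynomial ring in $d-i-1$ variables with $HP(S/I_\xi,k)=\binom{k+d-i-2}{d-i-2}$, this gives $HP(M,k)=\bigl(\sum_\xi a_\xi\bigr)\binom{k+d-i-2}{d-i-2}=\alpha_i\binom{k+d-i-2}{d-i-2}$, both matching and sharpening \corref{Hterm}. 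For the decomposition I would revisit the presentation of $M$ coming from $\mathcal{C}$, whose differentials are incidence matrices with entries in $\{0,\pm1\}$ gluing the complete intersections $S/I_\beta$ along codimension-one linear subspaces, and argue---by localizing at each $I_\gamma$ with $\gamma\in L_{i+2}$ and patching the local descriptions from the proof of \thmref{minassPrimes}---that Cohen--Macaulayness forces the gluing maps to be trivial, so that $M$ splits.

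The main obstacle is precisely this decomposition, because \corref{Hterm} controls only the leading coefficient of $HP(M,k)$ and the ``Cohen--Macaulay with linear associated primes'' data alone does not determine the whole Hilbert polynomial: for example $S/(wx)\subseteq\Q[w,x,y]$ is Cohen--Macaulay with associated primes the linear ideals $(w),(x)$, each of multiplicity one, yet $HP(S/(wx),k)=2k+1$ rather than $2(k+1)$. So one really has to exploit the combinatorial origin of $M$---its assembly from the $S/I_\beta$ by $\pm1$-matrices indexed by face incidences---to exclude such ``glued'' configurations. An alternative would be to run the argument inside the degenerate Cartan--Eilenberg spectral sequence of \secref{sec:three}: \thmref{SSvanish} already gives $\Ext^j(C^0(\Sigma),S)=0$ for $j>0$, and a closer look at the surviving $E_2$-terms $\Ext^{d-i+1}(H_{d-i}(\mathcal{C}),S)$ should recover the graded Betti numbers, and hence the Hilbert polynomial, of each $H_{d-i}(\mathcal{C})$ directly.
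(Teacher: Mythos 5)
Your reduction is the right first move and is surely the intended one: \propref{euler} turns the corollary into the claim that, under the hypothesis, $HP(H_{d-i}(\mathcal{C}),k)=\alpha_i\binom{k+d-i-2}{d-i-2}$ \emph{exactly} for $1\le i\le d-1$ (the case $i=d-1$ being the finite-length case, contributing $0$), and your reindexing check is correct. But the proposal does not prove that claim; it only names two strategies and defers both. The step you yourself call ``the main obstacle'' is precisely the gap: \corref{Hterm} and the localizations $M_{I_\xi}\simeq (S/I_\xi)_{I_\xi}^{a_\xi}$ from the proof of \thmref{minassPrimes} control only the codimension-$(i+1)$ behaviour, i.e.\ the leading coefficient of $HP(H_{d-i}(\mathcal{C}),k)$, and your own example $S/(wx)$ shows that ``Cohen--Macaulay, linear associated primes, prescribed generic multiplicities'' does not pin down the Hilbert polynomial. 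The sentence ``Cohen--Macaulayness forces the gluing maps to be trivial'' is therefore not an argument but a restatement of what must be shown, and as a general principle it is contradicted by that same example, since $S/(wx)$ is Cohen--Macaulay and nontrivially glued. The patching scheme you sketch also looks in the wrong place: the difference between $S/(wx)$ and $S/(w)\oplus S/(x)$ is invisible at the codimension-one primes and only appears after localizing at $(w,x)$, i.e.\ in codimension $i+2$ and deeper, where one would need genuinely new input from the incidence structure of $\partial_{d-i+1}$ and $\partial_{d-i}$ (or an argument that such glued configurations cannot arise among the $H_{d-i}(\mathcal{C})$); none is supplied. Note also that proving the splitting $M\simeq\bigoplus_\xi(S/I_\xi)^{a_\xi}$ is stronger than needed --- only the Hilbert polynomial is required --- yet even the weaker statement is left unestablished.

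The fallback via the Cartan--Eilenberg spectral sequence does not close the gap as described either. Under the hypothesis the row filtration degenerates so that $C^0(\Sigma)^\vee$ acquires a filtration whose nontrivial graded pieces are the modules $\Ext^{d-i+1}(H_i(\mathcal{C}),S)$, to be compared with the pieces $\Ext^{d-i}(S/I_\tau,S)\simeq (S/I_\tau)(d-i)$ coming from the column filtration; but for a Cohen--Macaulay module $M$ of codimension $c$, $\Ext^c(M,S)$ has the same support and the same generic multiplicities as $M$, so extracting the $\alpha_i$ from these pieces runs into exactly the same ``multiplicity versus full Hilbert polynomial'' issue, now in dual form. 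So the proposal correctly isolates, but does not supply, the one piece of content the corollary needs beyond \propref{euler}, \corref{Hterm} and \thmref{SSvanish}; since the paper states the corollary without a written proof, this missing step cannot be waved off as routine, and as submitted the argument is incomplete.
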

When $\Sigma$ is simplicial, Lemma~\ref{CdimPrimes} can be strengthened
\cite{s} to show that for $i \ge 1$, $H_{d-i}(\mathcal{C})$ is supported in 
codimension at least $i+2$, and that
\[
C^0(\Sigma)\mbox{ is free iff } H_i(\mathcal{C}) = 0 \mbox{ for all } i< d.
\]
Related results of Franz-Puppe appear in \cite{FP}, \cite{FP2}. 
In Example~\ref{exm:first}, $C^0(\Sigma)$ and $C^0(\Sigma')$ are both free; for 
$\Sigma'$ all the lower homology modules of $\mathcal{C}$ vanish, while for
$\Sigma$, $H_2(\mathcal{C}) \ne 0$. Even for $\Sigma$ simplicial, there are
easy examples where $C^0(\Sigma)$ is nonfree.
\begin{exm}
Consider the cone over the planar complex $\Sigma$ below, where the
(cone over) the central triangle has been removed:
\begin{figure}[ht]
\begin{center}
\epsfig{file=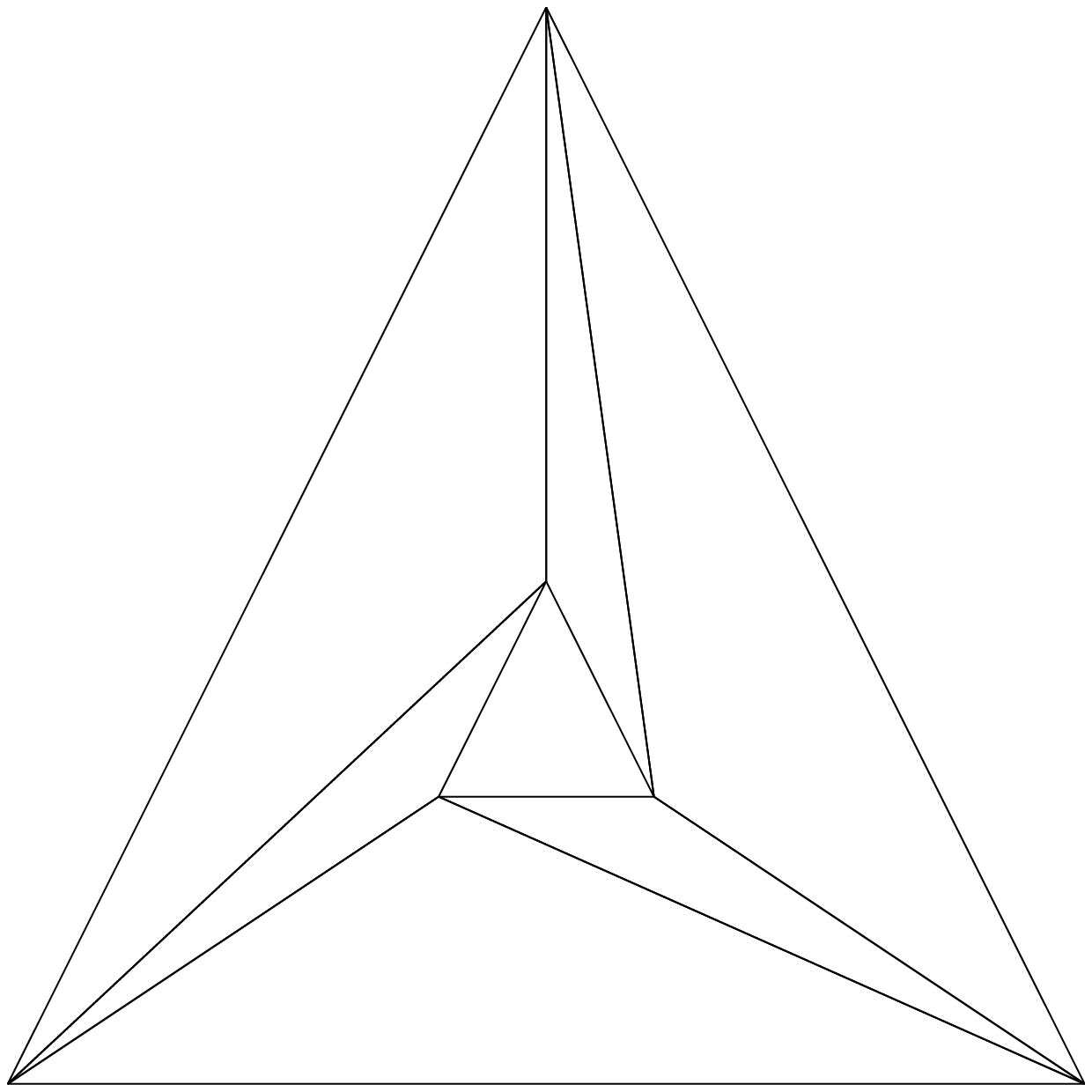,height=1.3in,width=1.5in}
\end{center}
\caption{\textsf{A simplicial nonfree $\Sigma$}}
\label{fig:nonfree}
\end{figure}

The topological nontriviality of $\Sigma$ manifests in the nonvanishing of
$H_2(\mathcal{C})$, which has high codimension: $Ext^2(H_2(\mathcal{C}))=0$, but
$Ext^3(H_2(\mathcal{C})) \ne 0.$ Comparing terms in the spectral sequence shows that this forces
\[
Ext^1(C^0(\Sigma),S)) \ne 0.
\]
$C^0(\Sigma)$ is reflexive sheaf on $\p^2$, so is locally free. The above shows it does not split.
\end{exm}
\begin{defn}\label{shell} A pure dimensional polyhedral complex is shellable if there
exists an ordering of facets $\{P_1,\ldots,P_k\}$ so that $\partial(P_1)$ is shellable,
and $P_i \cap (\cup_{j<i}P_j)$ is the start of a shelling of $\partial(P_i)$
\end{defn}
If $P$ is simplicial, an induction shows that shellability implies the 
Stanley-Reisner ring $A_\Delta$ is Cohen-Macaulay \cite{bh}. For
$\Sigma$ a pure polyhedral complex, if $\Sigma'$ is obtained from 
$\Sigma$ by removing a full dimensional cell $P$, then all faces of $P$
are part of the boundary of $\Sigma'$. This gives a short exact sequence of complexes
as in Definition~\ref{compC}, where $\mathcal{C}$ and $\mathcal{C}'$ correspond to $\Sigma$
and $\Sigma'$, and $\mathcal{C}''$ comes from $P$ and all its faces. This yields
a long exact sequence. 
\[
0 \longrightarrow \langle \prod\limits_{\tau \in P_{d-1}}l_\tau \rangle 
\longrightarrow C^0(\Sigma) \longrightarrow C^0(\Sigma') \longrightarrow H_{d-1}(\mathcal{C}'')
\longrightarrow \cdots
\]
\noindent In the simplicial case, $H_{d-1}(\mathcal{C}'')=0$, but this need not hold in the
polyhedral case. Nevertheless, computational evidence leads us to ask:
\begin{que}
If $\Sigma$ is shellable, is $C^0(\Sigma)$ a free $S$-module?
\end{que}
\section{Fans associated to type a root systems}\label{sec:four}
In this section, we analyze two very different families of fans associated to the reflection 
arrangement $\An$. Though the fans are quite different, for both families $C^0(\Sigma)$ is 
isomorphic to the module of vector fields tangent to $\An$. Given a fan, we associate
to it the hyperplane arrangement in $\p^{d-1}$ defined by $\overline{\tau}$ such that 
$\tau \in \Sigma_{d-1}^0$. For a collection of hyperplanes 
\[
\A = \bigcup\limits_{i=1}^nH_i \subseteq \C^d,
\]
the intersection lattice $L(\A)$ consists of the intersections of the elements of 
${\mathcal A}$; the rank of $x \in L_{\mathcal A}$ is simply the codimension 
of $x$. $\C^d$ is the lattice element $\hat{0}$; the rank one elements are the hyperplanes 
themselves.
\begin{defn}
The M\"{o}bius function $\mu$ : $L({\mathcal A}) \longrightarrow \Z$ is defined
 by $$\begin{array}{*{3}c}
\mu(\hat{0}) & = & 1\\
\mu(t) & = & -\!\!\sum\limits_{s < t}\mu(s) \mbox{, if } \hat{0}< t
\end{array}$$
The Poincar\'e polynomial is 
\[
\pi(\A,t) = \!\!\sum\limits_{x \in L({\mathcal A})}\mu(x) \cdot (-t)^{\text{rank}(x)}.
\]
\end{defn}
One of the main algebraic objects associated to an arrangement
is the graded $S$--module $D({\mathcal A})$ of vector fields
tangent to $\A$.
\begin{defn}
 $D({\mathcal A}) = \{ \theta \mid \theta(l_i) \in \langle l_i \rangle \mbox{ for all }V(l_i) \in \A \} \subseteq Der_{\Q}(S).$
\end{defn}
It follows immediately from the definition that $D(\A)$ can be computed as the 
kernel of a matrix similar to $\phi$ in Equation~\ref{BR}: If $l_j =\sum a^j_i x_i$, let $D(l)$ be the $|\A| \times d$ matrix with $j^{th}$ row $[a^j_1,\ldots, a^j_d]$. Then $D(\A)$ is the kernel of
\[
{\small \left[ D(l) \Biggm| \begin{array}{*{3}c}
l_{\tau_1} & \  & \  \\
\ & \ddots & \  \\
\ & \ & l_{\tau_m}
\end{array} \right],}
\]
\begin{defn}\label{free}
$\mathcal{A}$ is {\em free} if $D({\mathcal A}) \simeq
\oplus S(-d_i)$; the $d_i$ are the {\em exponents} of ${\mathcal A}$.
\end{defn}
In \cite{t}, Terao proves that if $D({\mathcal A}) \simeq \oplus S(-d_i)$, then $\pi(\A,t)= \prod(1+d_it)$, and
in \cite{t2} shows that if $G$ is a finite reflection group acting with no fixed points, then
the arrangement of reflecting hyperplanes $\A_G$ is free, with $d_i$ equal to the degrees of 
generators of $S^G$.
\begin{exm}\label{AN}
The reflecting hyperplanes of $SL(n)$ are 

\[
\bigcup_{1 \le i < j \le n+1}V(x_j-x_i) = \An \subseteq \C^{n+1}.\]
The symmetric group $S_n$ action fixes the 
subspace $(t,\ldots,t)$. Projecting along this subspace yields 
$\An \subseteq \C^n$, with $S_n$ acting without fixed points. By \cite{t2}, 
$\An \subseteq \C^n$ is free with exponents $\{1,2,\ldots, n \}$ and
$\An \subseteq \C^{n+1}$ is free with exponents $\{0,1,2,\ldots, n \}$.
Up to projective transformation, $A_3$ is exactly the configuration defined by 
the six $\tau \in \Sigma_2^0$ appearing in Example~\ref{exm:first}.
The corresponding $\overline{\tau}$ yield an arrangement of six planes 
thru the origin $\C^3$, depicted as a set of lines in $\p^2$:
\begin{figure}[ht]
\subfigure{%
\label{fig:braid4}%
\begin{minipage}[t]{0.35\textwidth}
\setlength{\unitlength}{16pt}
\begin{picture}(6,8)(-1,-2.5)
\put(-0.8,0){\line(1,0){5.6}}
\put(-0.8,-0.4){\line(2,1){4.7}}
\put(-0.4,-0.8){\line(1,2){2.8}}
\put(2,-0.8){\line(0,1){5.6}}
\put(4.8,-0.4){\line(-2,1){4.7}}
\put(4.4,-0.8){\line(-1,2){2.8}}
\put(-1.7,0.3){\makebox(0,0){$L_{1}$}}
\put(-1.5,-0.6){\makebox(0,0){$L_{4}$}}
\put(-0.5,-1.3){\makebox(0,0){$L_{2}$}}
\put(2,-1.3){\makebox(0,0){$L_{5}$}}
\put(4.5,-1.3){\makebox(0,0){$L_{3}$}}
\put(5.5,-0.6){\makebox(0,0){$L_{6}$}}
\end{picture}
\end{minipage}
}
\setlength{\unitlength}{0.8cm}
\subfigure{%
\label{fig:partitionlat}%
\begin{minipage}[t]{0.45\textwidth}
\begin{picture}(5,5.7)(0,-4)
\xygraph{!{0;<10mm,0mm>:<0mm,14mm>::}
[]*D(3){{\bf 0}}*-{\bullet}  
(
-@{..}[dlll]*D(3.4){124}*-{\bullet}  
(
-@{..}[d]*U(2.5){1}*-{\bullet}-@{..}[drrr]*U(2.5){\C^3}*-{\bullet} 
,-@{..}[dr]*U(2.5){4}*-{\bullet}-@{..}[drr]
,-@{..}[drr]*U(2.5){2}*-{\bullet}-@{..}[dr]
)
,-@{..}[dll]*D(2.1){34}*-{\bullet}  
(
-@{..}[d]
,-@{..}[drrrr]*U(2.5){3}*-{\bullet}-@{..}[dll]
)
,-@{..}[dl]*D(3.4){136}*-{\bullet} 
(
-@{..}[dll]
,-@{..}[drr]*U(2.5){6}*-{\bullet}-@{..}[dl]
,-@{..}[drrr]
)
,-@{..}[d]*D(2.1){26}*-{\bullet} 
(
-@{..}[dl]
,-@{..}[dr]
)
,-@{..}[dr]*D(3.4){456}*-{\bullet}  
(
-@{..}[dlll]
,-@{..}[d]
,-@{..}[drr]*U(2.5){5}*-{\bullet}-@{..}[dlll]
)
,-@{..}[drr]*D(2.1){15}*-{\bullet}  
(
-@{..}[dlllll]
,-@{..}[dr]
)
,-@{..}[drrr]*D(3.4){235}*-{\bullet}  
(
-@{..}[dllll]
,-@{..}[dl]
,-@{..}[d]
)
(
}
\end{picture}
\end{minipage}
}
\vskip -.12in
\caption{\textsf{The braid arrangement $A_{3}$ and its intersection lattice in $\C^3$}}
\label{fig:braid2}
\end{figure}
\end{exm}
\noindent In Example~\ref{exm:third}, $C^0(\Sigma)$ is free, with generators in degrees $\{0,1,2,3,4\}$,
and the associated arrangement is A$_4$. This motivates:
\begin{defn}
Let $v_i=(n+1){\bf e_i} -\sum_{i=1}^n {\bf e_i}$ and $\Delta_2 = \{ v_1,\ldots, v_n \}$.
Define cones $a_0 = \cone\{{\bf e_1},\ldots,{\bf e_n}\}$, and for $i \in \{1,\ldots,n\}$, 
$a_i = \cone\{ \Delta_2 \setminus v_i, a_0 \setminus {\bf e_i} \}$. Then 
$P_2(\An) = \{a_0, a_1,\ldots, a_n \}$ is a polyhedral fan, which is a subdivision of $\cone(\Delta_2)$.
\end{defn}
\noindent The fan $P_2(A_3)$ appears in Example~\ref{exm:first} and the fan $P_2(A_4)$ in Example~\ref{exm:third}. 
\begin{thm}\label{Aroot}
$C^0(P_2(\An)) \simeq D(\An)$. 
\end{thm}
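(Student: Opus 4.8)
The plan is to identify both modules with the kernel of the same presentation matrix, up to a coordinate change. Recall from Equation~\ref{BR} that $C^0(P_2(\An))$ is the kernel of $\phi = [\partial_d \mid D]$, where $D$ is the diagonal matrix of linear forms $l_\tau$ for $\tau \in \Sigma_{d-1}^0$, and $\partial_d$ records the adjacency of maximal cones across interior walls with signs $\pm(1,-1)$. Likewise, from \secref{sec:four}, $D(\An)$ is the kernel of $[D(l) \mid \mathrm{diag}(l_{\tau_i})]$, where the $\tau_i$ run over the $n\binom{}{}$ hyperplanes... more precisely the $\binom{n+1}{2}$ hyperplanes of $\An \subseteq \C^n$, and $D(l)$ is the $|\An| \times n$ matrix of coefficient vectors of the defining forms. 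So the first step is the combinatorial heart of the matter: \textbf{show that the interior walls $\tau \in \Sigma_{d-1}^0$ of $P_2(\An)$ are in bijection with the hyperplanes of $\An$, with $\overline\tau$ the corresponding hyperplane.} Here $d=n$. One checks directly from the definition of the cones $a_0,\dots,a_n$: a wall $a_i \cap a_j$ is spanned by $n-1$ of the rays, and I would verify that its linear span is exactly one of the hyperplanes $V(x_k - x_l)$ (after the projective identification used in \exref{AN} to pass from $\C^{n+1}$ to $\C^n$). Example~\ref{exm:first} already records this for $n=3$: the six interior facets give $A_3$.

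Next I would \textbf{match the two presentation matrices}. Given the bijection above, the diagonal blocks $\mathrm{diag}(l_\tau)$ agree on the nose (up to scaling each $l_\tau$, which does not change the kernel). The left blocks have different sizes — $\partial_d$ has $f_d = n+1$ columns (one per maximal cone) while $D(l)$ has $n$ columns — so these cannot be literally equal; instead I would exhibit an isomorphism of the two kernels directly, or better, produce a commutative diagram. The cleanest route: both $C^0(P_2(\An))$ and $D(\An)$ sit inside free modules, and I would construct an explicit map between the two ambient setups. A vector field $\theta = \sum g_i \partial_{x_i} \in D(\An)$ restricts to a polynomial on each maximal cone $a_j$ (namely the linear form $\sum g_i x_i$ is not quite it — rather $\theta$ itself, being a global polynomial, automatically restricts continuously), giving an element of $\bigoplus_{\sigma} S$; the condition $\theta(l_\tau) \in \langle l_\tau\rangle$ is exactly the continuity condition across the wall $\tau$ once one notes that $\theta(l_\tau) = \theta(l_\tau)$ measures the jump. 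Conversely, a $C^0$-spline that happens to be given by a single global \emph{linear} form on $\cone(\Delta_2)$... this is the subtlety.

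The cleanest formulation, and the one I would actually carry out: \textbf{show both modules are free with the same exponents and the same first graded Betti numbers, hence isomorphic}, OR — more robustly — \textbf{show the inclusion $D(\An) \hookrightarrow C^0(P_2(\An))$ obtained by restricting a global vector field to the cones, and then show it is an equality by a Hilbert-polynomial count.} For the inclusion: a global polynomial vector field $\theta$ gives, on each maximal cone $a_j$, the single polynomial ``$\theta$'' (viewing $\theta$ as an element of $S$ after contracting with a fixed form?), and $C^0$ continuity across $\tau$ holds because $\theta$ is globally defined; the tangency condition $\theta(l_\tau)\in\langle l_\tau\rangle$ is precisely what is needed for this spline to lie in $C^0$... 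I must be careful: an arbitrary $C^0$-spline is a \emph{tuple} of polynomials, not a single global one, so the map goes the other way. Reconsidering: restriction to maximal cones embeds $C^0(P_2(\An))$ in $\bigoplus_\sigma S$; separately $D(\An) \subseteq \mathrm{Der}_\Q(S) = S^n$. I would exhibit a module map $D(\An) \to C^0(P_2(\An))$ by sending $\theta$ to the spline whose value on $a_j$ is $\theta(\ell_j)$ for a suitable linear form $\ell_j$ attached to $a_j$, check continuity across each wall using the bijection of Step 1, check injectivity, and then apply \corref{3d} (for $n=3$) and its higher-dimensional analog via Proposition~\ref{euler} together with Terao's formula $\pi(\An,t)=\prod_{i=1}^n(1+it)$ to see that $HP(C^0(P_2(\An)),k) = HP(D(\An),k) = \sum_{i=0}^n \binom{k+n-1-i}{n-1}$, forcing surjectivity since both are torsion-free of the same rank. \textbf{The main obstacle} is Step 1 — pinning down the exact correspondence between the walls of $P_2(\An)$ and the hyperplanes of $\An$, including the projective bookkeeping between $\C^{n+1}$, $\C^n$, and $\p^{n-1}$, and verifying that the sign pattern of $\partial_n$ matches the incidence structure of $\An$ — because everything downstream (matrix matching, Hilbert polynomial count) is then essentially formal given the tools already assembled in \secref{sec:two} and \secref{sec:three}.
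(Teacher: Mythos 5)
Your Step 1 matches the paper's opening move (the walls of $P_2(\An)$ span exactly the hyperplanes $V(x_i)$ and $V(x_j-x_i)$, $1\le i<j\le n$), but from there you diverge, and your replacement argument has concrete gaps. The paper's key device --- which you explicitly talk yourself out of when you note the ``different sizes'' of the left blocks --- is to compare with the \emph{non-essential} arrangement $\An\subseteq\C^{n+1}$: its coefficient matrix $D(l)$ is the signed incidence matrix of the complete graph $K_{n+1}$, of size $\binom{n+1}{2}\times(n+1)$, and with the orderings chosen in the paper it is \emph{literally equal} to $\partial_d$ for $P_2(\An)$ (the fan has $n+1$ maximal cones, one per vertex of $K_{n+1}$). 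The diagonal blocks then agree after specializing $x_{n+1}=0$ (the entries $x_{n+1}-x_i$ become $-x_i$), and Terao's theorem is invoked for the essentialized arrangement; no size mismatch ever arises. By insisting on the essential $\An\subseteq\C^{n}$ you create the rank problem yourself: $D(\An\subseteq\C^{n})$ has rank $n$ while $C^0(P_2(\An))$ has rank $n+1$ (one copy of $S$ per maximal cone), so your map $\theta\mapsto(\theta(\ell_j))_j$ --- which does work, e.g.\ with $\ell_0=0$, $\ell_i=x_i$, continuity across $a_i\cap a_j$ being exactly tangency to $V(x_i-x_j)$ --- can never be surjective; at best it exhibits $C^0$ as the essential derivation module plus the summand of global polynomials, a bookkeeping step you leave unresolved.

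Second, your surjectivity-by-counting step fails twice. The Hilbert polynomial of $C^0(P_2(\An))$ is not available from Proposition~\ref{euler} or Corollary~\ref{3d} for general $n$: those formulas involve the lower homology $H_i(\mathcal{C})$, which is nonzero here (already for $n=4$, $H_2(\mathcal{C})\ne 0$ in Example~\ref{exm:third}) and is not computed for $P_2(\An)$ in general; in the paper the freeness and exponents of $C^0(P_2(\An))$ (Corollary~\ref{Ak}) are \emph{deduced from} Theorem~\ref{Aroot} via Terao, not used to prove it, so your count is circular. And even granted the count, ``same Hilbert polynomial and torsion-free of the same rank'' does not force an inclusion of graded modules to be an equality (compare $\mathfrak{m}\subsetneq S$); you would need equality of Hilbert functions, or a depth/local-cohomology argument exploiting the freeness of $D(\An)$ to exclude a finite-length cokernel. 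Finally, you locate the main obstacle in Step 1, but that is the easy ``computation shows'' part; the substance is precisely the matrix identification (or an honest substitute for it) that you label essentially formal.
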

\begin{proof}
The main point is that the passage from 
$n$ to $n+1$ induces the same modification in the matrices which 
compute $D(\An)$ and $C^0(P_2(\An))$. A computation shows that the
hyperplanes $\overline{\tau}$ for $\tau \in  P(\An)^0_{d-1}$ are of the
form $V(x_i)$ or $V(x_j -x_i)$ for $1 \le i<j \le n$. 
Order the $d$-faces of $P_2(\An)$ as $\{a_d, a_{d-1}, \ldots,a_0 \}$ 
(see Figure~1), and
order $\tau \in \Sigma_{d-1}^0$ so that the $x_i$ are the first $n$ elements, and the
remaining hyperplanes form a second block. For example, the lines $L_1,L_2,L_3$ in Figure~3 
correspond to the boundary of the inner triangle in Figure~1, and lines $L_4,L_5,L_6$ 
correspond to the $\tau \in \Sigma^0_2$ defining $V(x_i-x_j)$. 
Thus, the matrix which computes $C^0(P_2(\Aroot_3))$ has the form:
\[
 {\small \left[ \begin{array}{*{4}c}
-1 & 0&0 &1  \\
0 & -1 &0 &1\\
0&0 & -1 &1\\
-1&0& 1& 0\\
0& -1&1& 0\\
-1 &1 & 0 &0
\end{array}
\begin{array}{*{6}c}
x_1 &  0  & 0 & 0  &0  &0 \\
 0   & x_2  &0 & 0  &0  & 0\\
 0   &0   &x_3  &0   & 0 &0 \\
  0  & 0  & 0 &x_3-x_1   &0  &0 \\
 0   & 0  &0  & 0  & x_3-x_2 &0 \\
0    & 0  &0  & 0  &0  & x_2-x_1\\
\end{array} \right]}
\]
If we consider $\Aroot_3 \subseteq \C^4$, and order so that the equations $x_4-x_i$ appear
first, then the matrix which computes $D(\Aroot_3)$ has the form:
\[
 {\small \left[ \begin{array}{*{4}c}
-1 & 0&0 &1  \\
0 & -1 &0 &1\\
0&0 & -1 &1\\
-1&0& 1& 0\\
0& -1&1& 0\\
-1 &1 & 0 &0
\end{array}
\begin{array}{*{6}c}
x_4-x_1 &  0  & 0 & 0  &0  &0 \\
 0   & x_4-x_2  &0 & 0  &0  & 0\\
 0   &0   &x_4-x_3  &0   & 0 &0 \\
  0  & 0  & 0 &x_3-x_1   &0  &0 \\
 0   & 0  &0  & 0  & x_3-x_2 &0 \\
0    & 0  &0  & 0  &0  & x_2-x_1\\
\end{array} \right]}
\]
Since $\Aroot_3 \subseteq \C^4$ contains the subspace $(t,t,\ldots,t)$, 
$D(\Aroot_3)$ has a free summand of degree zero. To apply \cite{t2},
$G$ must act with no fixed points. Specializing to $x_4=0$ yields 
$\Aroot_3 \subseteq \C^3$ with this property. The general case follows by induction.
\end{proof}

\begin{cor}\label{Ak}
As an $S$--module, 
$A^*_T(X_{P_2(\An)})_{\Q} \simeq \bigoplus\limits_{i=0}^nS(-i).$
\end{cor}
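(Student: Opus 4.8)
The plan is to combine \thmref{Aroot} with Terao's theorem on reflection arrangements, exactly as set up in \exref{AN}. By \thmref{Aroot} we have an isomorphism of graded $S$--modules $C^0(P_2(\An)) \simeq D(\An)$, and by Payne's result (recalled in the introduction) $A^*_T(X_{P_2(\An)})_{\Q} \simeq C^0(P_2(\An))_{\Q}$, so it suffices to identify $D(\An)$ as a graded module. Here one must be careful about which copy of $\An$ is meant: \thmref{Aroot} produces the arrangement whose hyperplanes are $V(x_i)$ and $V(x_j-x_i)$ for $1 \le i<j \le n$, sitting in $\C^n$ — this is precisely $\An \subseteq \C^n$ after the projection along the fixed line described in \exref{AN}. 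The computation in the proof of \thmref{Aroot} already exhibits the matrix in the $\C^n$ (fixed-point-free) normalization, so no extra free summand appears.

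The key step is then to invoke Terao's freeness theorem \cite{t2}: since $S_n$ acts on $\C^n$ as a reflection group with no fixed points, the arrangement $\An \subseteq \C^n$ is free, and the exponents $d_i$ are the degrees of the fundamental invariants of the symmetric group, namely $1,2,\ldots,n$. Hence $D(\An) \simeq \bigoplus_{i=1}^n S(-d_i) = \bigoplus_{i=1}^n S(-i)$. Reindexing, this is $\bigoplus_{i=1}^{n} S(-i)$; the statement as written runs $i$ from $0$ to $n$, which corresponds to the $\C^{n+1}$ normalization where the fixed subspace $(t,\ldots,t)$ contributes the extra degree-zero generator (exponents $0,1,\ldots,n$, as recorded in \exref{AN}). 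So I would phrase the final chain as
\[
A^*_T(X_{P_2(\An)})_{\Q} \simeq C^0(P_2(\An))_{\Q} \simeq D(\An)_{\Q} \simeq \bigoplus_{i=0}^{n} S(-i),
\]
taking the $\C^{n+1}$ presentation of $\An$ so the indexing matches the corollary statement, and noting that tensoring the integral isomorphism of \thmref{Aroot} with $\Q$ is harmless.

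The only real subtlety — and the step I would be most careful about — is bookkeeping the ambient dimension and the grading shift: whether we are looking at $\An \subseteq \C^n$ (exponents $1,\ldots,n$, no free summand) or $\An \subseteq \C^{n+1}$ (exponents $0,1,\ldots,n$, with the degree-zero summand coming from the diagonal line), and making sure the $d = \dim \Sigma$ in $C^0(P_2(\An))$ matches $n$ or $n+1$ consistently with the convention in \defref{free}. Once that is pinned down, the corollary is immediate: it is just \thmref{Aroot} followed by Terao's computation of the exponents of the braid arrangement, with Payne's identification $A^*_T \simeq C^0$ providing the link to equivariant Chow cohomology. There is essentially no hard analytic or homological content left at this point; it is a direct consequence of the two cited theorems plus the main theorem of the section.
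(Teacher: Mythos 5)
Your overall route is the same as the paper's (which leaves the corollary as an immediate consequence): Payne's isomorphism $A^*_T(X_\Sigma)\simeq C^0(\Sigma)$, then \thmref{Aroot}, then Terao's computation of the exponents of $\An$ as recorded in \exref{AN}. The conclusion you reach is the right one, so this is essentially the intended argument.

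The one step you should repair is the normalization bookkeeping, which as written is circular and at one point wrong. Your claim that the proof of \thmref{Aroot} ``exhibits the matrix in the $\C^n$ (fixed-point-free) normalization, so no extra free summand appears'' cannot be correct: $C^0(P_2(\An))$ always contains the constant splines as a degree-zero free summand, and its rank over $S=\Q[x_1,\ldots,x_n]$ equals the number of maximal cones of $P_2(\An)$, namely $n+1$, whereas $D(\An\subseteq\C^n)$ has rank $n$ with exponents $1,\ldots,n$. Had you followed that normalization you would get $\bigoplus_{i=1}^n S(-i)$, which for $n=3$ has Hilbert polynomial $(3k^2-3k+2)/2$ rather than the $2k^2+2$ computed in Example~\ref{exm:first}. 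Your subsequent switch to the $\C^{n+1}$ presentation ``so the indexing matches the corollary'' fixes the answer but not the reasoning: one should not choose the presentation to fit the desired statement, but observe that in the proof of \thmref{Aroot} the incidence block $\partial_d$ of the spline matrix (with $n+1$ columns, one per maximal cone) coincides with the coefficient matrix $D(l)$ for $\An\subseteq\C^{n+1}$, so the relevant module is $D(\An\subseteq\C^{n+1})$, which by \cite{t2} (after splitting off the derivation along the fixed line $(t,\ldots,t)$, exactly as discussed at the end of that proof) is free with exponents $0,1,\ldots,n$. With that pinned down, the chain $A^*_T(X_{P_2(\An)})_\Q\simeq C^0(P_2(\An))_\Q\simeq D(\An)_\Q\simeq\bigoplus_{i=0}^n S(-i)$ goes through as you state.
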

\begin{prop}\label{Aroot2}
Let $P_1(\An)$ denote the fan such that $X_{P_1(\An)} = \p^n$. Then
\[
C^0(P_1(\An)) \simeq D(\An). 
\]
\end{prop}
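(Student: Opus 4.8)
The plan is to show that the matrix which computes $C^0(P_1(\An))$ is, after a change of coordinates and a reordering, exactly the matrix $[D(l)\mid \mathrm{diag}(l_{\tau})]$ whose kernel is $D(\An)$, just as in the proof of Theorem~\ref{Aroot}. First I would identify the fan $P_1(\An)$ explicitly: since $X_{P_1(\An)} = \p^n$, the fan is the complete fan in $\Q^n$ whose rays are $\mathbf{e}_1,\ldots,\mathbf{e}_n$ and $-\sum \mathbf{e}_i$, with one maximal cone for each of the $n+1$ rays (the cone spanned by all rays but that one). This fan has $n+1$ maximal cones and $\binom{n+1}{2}$ interior walls $\tau \in \Sigma^0_{n-1}$, each wall being the cone over a set of $n-1$ of the rays. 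The key computation is that the linear span $\overline{\tau}$ of such a wall is a hyperplane of one of the two forms $V(x_i)$ or $V(x_i - x_j)$: the wall spanned by $\{\mathbf{e}_k : k \ne i,j\}$ together with $-\sum \mathbf{e}_k$ lies in the hyperplane $x_i = x_j$, while the wall spanned by $\{\mathbf{e}_k : k \ne i\}$ lies in $x_i = 0$. Hence the hyperplane arrangement attached to $P_1(\An)$ via the $\overline{\tau}$ is precisely $\An \subseteq \C^n$ in the projected coordinates of Example~\ref{AN}.

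Next I would compare the two presentation matrices directly. By Equation~\ref{BR}, $C^0(P_1(\An))$ is the kernel of $\phi = [\partial_d \mid D]$, where $\partial_d$ is the (reduced) boundary matrix of the fan and $D$ is the diagonal matrix of the linear forms $l_\tau$ cutting out the $\overline{\tau}$. By the remark following Definition~\ref{free}, $D(\An)$ is the kernel of $[D(l)\mid \mathrm{diag}(l_\tau)]$, where $D(l)$ records the coefficient vectors of the defining forms of the hyperplanes. The content of the proof is that $\partial_d$ can be taken equal to $D(l)$ after reordering rows (walls) and columns (maximal cones $\leftrightarrow$ coordinate directions): each wall $\tau$ is adjacent to exactly two maximal cones $a_i, a_j$, its row of $\partial_d$ has entries $\pm(1,-1)$ in those two slots, and this is exactly the coefficient vector of $x_i - x_j$ or $x_i$ (the latter when one of the two adjacent cones is the one omitting the ray $-\sum\mathbf{e}_k$, i.e.\ the "cone at infinity"). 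So up to signs and ordering the left blocks agree, the right blocks agree by the hyperplane identification above, and therefore the kernels agree.

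The remaining point, exactly as in Theorem~\ref{Aroot}, is the dimension bookkeeping between $\An \subseteq \C^{n+1}$ and $\An \subseteq \C^n$. Working in $\C^n$ with the projected arrangement, $S_n$ acts without fixed points, so Terao's theorem \cite{t2} applies and gives that $D(\An)$ is free with exponents $\{1,2,\ldots,n\}$; one checks the left-block/coordinate dictionary is compatible with the inductive passage from $n-1$ to $n$ (adding one ray, one coordinate, and the corresponding new walls), which is the same modification on both sides. I expect the main obstacle to be purely combinatorial: verifying carefully that the adjacency structure of the maximal cones of $P_1(\An)$ — in particular which wall is adjacent to the "infinity" cone — produces exactly the $V(x_i)$ hyperplanes with the correct coefficient rows and signs, so that the boundary matrix $\partial_d$ literally coincides (up to a signed permutation of rows and columns) with $D(l)$. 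Once that dictionary is pinned down the isomorphism of kernels is immediate.
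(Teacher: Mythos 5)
Your proposal is correct and takes essentially the same route as the paper: the paper's own (very terse) proof likewise rests on the observation that the walls of the $\p^n$ fan span exactly the hyperplanes of $\An$ and that, with the $n+1$ maximal cones ordered as for $P_2(\An)$, the Billera--Rose matrix coincides with the matrix computing $D(\An)$, so the conclusion follows exactly as in Theorem~\ref{Aroot}. Your write-up just makes explicit the fan description, the wall spans $V(x_i)$ and $V(x_i-x_j)$, and the cone-to-coordinate dictionary that the paper compresses into the ``mirror Schlegel diagram'' remark, and you resolve the $\C^{n+1}$ versus $\C^n$ bookkeeping by the same specialization/Terao step used there.
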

\begin{proof} $P_1(\An)$ has defining hyperplanes A$_n$, and $n+1$ top 
dimensional cones; order the cones in the same way as for $P_2(\An)$. Intuitively, 
the correspondence comes from a ``mirror'' Schlegel diagram for an $n$-simplex, where the
facets are reflected outwards from the hyperplane of projection.
\end{proof}
\begin{exm}
The fan $P_1(\An)$ is simplicial, with Stanley-Reisner ring 
$\Z[x_0,\ldots, x_n]/\langle x_0\cdots x_n \rangle$. This is easily seen to have Hilbert series 
\[
\frac{1-t^{n+1}}{(1-t)^{n+1}} = \sum_{i=0}^n t^i/(1-t)^n,
\]
which is indeed equal to the Hilbert series of $D(\An) \simeq \oplus_{i=0}^nS(-i)$. 
By Corollary~\ref{Ak}, this is also equal to $HS(A^*_T(X_{P_2(\An)})_{\Q},t)$.
\end{exm}

\begin{que}\label{otherRoots}
Is there an analog of Theorem~\ref{Aroot} for other classical groups?
\end{que}

\noindent {\bf Acknowledgments}:  Macaulay2 computations were essential to this work.
I thank Michael Dipasquale, Sam Payne and Vic Reiner for useful conversations.
\renewcommand{\baselinestretch}{1.0}
\small\normalsize 
\bibliographystyle{amsalpha}

\end{document}